\documentclass[11pt,oneside,reqno]{amsart}
\usepackage{amsmath,amssymb, geometry, color,tikz-cd}
\usepackage[mathlines,pagewise]{lineno}
\setlength{\parskip}{3pt} 
\usepackage{mathtools}
\usepackage[alphabetic]{amsrefs}

\usepackage[colorlinks=true, linkcolor=black, citecolor=black, urlcolor=blue]{hyperref}

\numberwithin{equation}{section}

\usepackage{bbm}
\usepackage{enumitem}
\usepackage{upgreek}

\newtheorem{proposition}{Proposition}[section]
\newtheorem{theorem}[proposition]{Theorem}
\newtheorem{lemma}[proposition]{Lemma}
\newtheorem{corollary}[proposition]{Corollary}

\theoremstyle{definition}
\newtheorem{remark}[proposition]{Remark}

\newtheorem{question}[proposition]{Question}

\title{Variation of cones of divisors in a family of varieties -- Fano type case}


\author{Sung Rak Choi}
\address[Sung Rak Choi]{Department of Mathematics, Yonsei University, 50 Yonsei-ro, Seodaemun-gu, Seoul 03722, Republic of Korea}
\email{sungrakc@yonsei.ac.kr}

\author{Zhan Li}
\address[Zhan Li]{Department of Mathematics, Southern University of Science and Technology, 1088 Xueyuan Road, Shenzhen 518055, China} \email{lizhan@sustech.edu.cn, lizhan.math@gmail.com}

\author{Chuyu Zhou}
\address[Chuyu Zhou]{School of Mathematical Sciences, Xiamen University, Siming South Road 422, Xiamen, Fujian 361005, China}
\email{chuyuzhou@xmu.edu.cn, chuyuzhou1@gmail.com}

\date{} 

\subjclass[2020]{14J45, 14E30}
\keywords{Fano type variety, Nef cone, Movable cone, Mori chamber decomposition.}

\newcommand{\Spec}{{\rm {Spec}}}

\newcommand{\Exc}{{\rm {Exc}}}

\newcommand{\Pic}{{\rm {Pic}}}

\newcommand{\Supp}{{\rm {Supp}}}

\newcommand{\Nef}{{\rm {Nef}}}

\newcommand{\Eff}{{\rm {Eff}}}
\newcommand{\Mov}{{\rm {Mov}}}

\newcommand{\bc}{{\rm {bc}}}
\newcommand{\NE}{{\rm {NE}}}

\newcommand{\bEff}{\overline{\rm {Eff}}}
\newcommand{\bMov}{\overline{\rm {Mov}}}
\newcommand{\dto}{\dashrightarrow}

\newcommand{\bC}{\mathbb{C}}
\newcommand{\bD}{\mathbb{D}}

\newcommand{\bN}{\mathbb{N}}

\newcommand{\bP}{\mathbb{P}}
\newcommand{\bQ}{\mathbb{Q}}
\newcommand{\bR}{\mathbb{R}}

\newcommand{\bZ}{\mathbb{Z}}

\newcommand{\mB}{\mathcal{B}}

\newcommand{\mD}{\mathcal{D}}

\newcommand{\mG}{\mathcal{G}}
\newcommand{\mH}{\mathcal{H}}

\newcommand{\mN}{\mathcal{N}}
\newcommand{\mO}{\mathcal{O}}
\newcommand{\mP}{\mathcal{P}}

\newcommand{\mS}{\mathcal{S}}

\newcommand{\mX}{\mathcal{X}}
\newcommand{\mY}{\mathcal{Y}}
\newcommand{\mZ}{\mathcal{Z}}

\begin{document}

\date{\today}

\begin{abstract}
We investigate the relationship between the Fano type property on fibers over a Zariski dense subset and the global Fano type property. We establish the invariance of N\'eron-Severi spaces, nef cones, effective cones, movable cones, and Mori chamber decompositions for a family of Fano type varieties after a generically finite base change. Additionally, we show the uniform behavior of the minimal model program for this family. These results are applied to the boundedness problem of Fano type varieties.
\end{abstract}

\maketitle

\setcounter{tocdepth}{1}

\tableofcontents

\section{Introduction}\label{sec: intro}

We work over the field of complex numbers.

For a projective variety $X$, various cones of divisors can be associated within the N\'eron-Severi space $N^1(X)$. Notable among them are the effective cone $\Eff(X)$, the nef cone $\Nef(X)$, and the movable cone $\Mov(X)$. Moreover, the movable cone admits a finer decomposition known as the Mori chamber decomposition. These cones as well as their structures ``linearize" the geometric properties of the variety. 

Suppose that $X \to T$ is a family of varieties. It is natural to associate the aforementioned cones with each fiber and study how the deformation of the varieties affects the structure of the various cones. However, these cones may change drastically. For instance, there may not exist an open subset $U \subset T$ where the dimension of the N\'eron-Severi space $\dim N^1(X_t)$ remains constant for all $t \in U$.  

One motivation for this note is to apply the cone structure to study the moduli problem of Fano type varieties. Therefore, instead of considering a family of Fano type varieties, one should only assume the existence of a Zariski dense subset over which the fibers are of Fano type. In general, such a Zariski dense subset might be very sparse. Thus, it is natural to ask:

\begin{question}\label{que: FT fiber to FT}
Let $X \to T$ be a family of varieties. Suppose that the fibers are of Fano type over a Zariski dense subset of $T$. After shrinking $T$, is $X$ of Fano type over $T$?
\end{question}

This natural question seems to be rather subtle, and we can only provide a partial answer.

\begin{theorem}\label{thm: FT is constructible}
Let $f: X \to T$ be a projective, surjective morphism between varieties such that $f_*\mO_X=\mO_T$. Suppose that there exists a Zariski dense subset $S\subset T$ such that for any $s\in S$, the fiber $X_s$ is a Fano type variety. Then, after shrinking $T$, there exists a divisor $D$ on $X$ such that $(X, D)$ has lc singularities and
\[
K_X+D \sim_\bQ 0/T.
\] Furthermore, if for any $s\in S$, the fiber $X_s$ falls into one of the following cases:
\begin{enumerate}[label=(\roman*)]
\item $X_s$ is a klt weak Fano variety for any $s\in S$,
\item $X_s$ is of Fano type for any $s\in S$ and $S$ consists of very general points,
\item $X_s$ is of Fano type for any $s\in S$ and $\{X_s \mid s\in S\}$ admits bounded klt complements,
\end{enumerate}
then, after shrinking $T$, we have that $X$ is of Fano type over $T$.
\end{theorem}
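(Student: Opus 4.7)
The plan is to construct a global relative log Calabi--Yau pair $(X,D)$ over $T$ using Birkar's theorem on boundedness of complements, and then, in each of the three cases, promote this pair to a Fano type structure by a mild perturbation.

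To produce such $(X,D)$, I would first invoke boundedness of complements to obtain a uniform integer $N=N(\dim X_s)$ such that every Fano type variety of this dimension admits an $N$-complement. After shrinking $T$ one may assume $T$ is smooth and $f$ is flat with normal fibers, and using the density of $S$ together with openness of the $\bQ$-Gorenstein property of bounded index in families, one may further arrange that every fiber $X_t$ is $\bQ$-Gorenstein with Cartier index dividing $N$. Then $K_{X/T}$ is $\bQ$-Cartier and, after further shrinking, $\mF:=f_*\mO_X(-NK_{X/T})$ is locally free and compatible with base change, with fiber $H^0(X_t,-NK_{X_t})$ at every $t\in T$. Inside the relative projective bundle $\bP(\mF)$ the locus parametrizing sections $\sigma$ whose associated divisor $\tfrac{1}{N}(\sigma)$ restricts fiberwise to an lc complement is constructible and contains the complements $D_s$ for $s$ in the dense set $S$. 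Extracting an irreducible component dominating $T$, passing to a finite \'etale cover of $T$ to produce an actual section, and then descending, one arrives after shrinking at a global $\bQ$-divisor $D$ on $X$ with $N(K_X+D)\sim 0/T$ whose restriction to the general fiber is lc; inversion of adjunction for flat families of lc pairs then upgrades this to lc-ness of $(X,D)$.

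The principal obstacle I anticipate is the gluing step: ensuring that the fiberwise $N$-complements assemble into a global divisor with the correct singularities. The delicate ingredients are arranging the $\bQ$-Cartier property of $K_{X/T}$ so that the sheaf $\mO_X(-NK_{X/T})$ is well-defined as a line bundle, establishing the constructibility of the locus of lc complements inside $\bP(\mF)$, and propagating lc singularities from the fibers to the total space. Each of these rests on standard but nontrivial openness and semi-continuity arguments in the families setting.

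Once $(X,D)$ is in hand, the three cases follow by perturbation. In case (i), openness of the klt condition in flat families with klt fibers implies $(X,0)$ is klt after shrinking $T$; for a sufficiently small rational $\epsilon>0$ the pair $(X,\epsilon D)$ is still klt, and since $D\sim_\bQ -K_X/T$ while $-K_{X_t}$ is nef and big on each weak Fano fiber, the divisor $-(K_X+\epsilon D)\sim_\bQ (1-\epsilon)D/T$ is nef and big over $T$, showing that $X/T$ is of Fano type. Case (iii) proceeds identically, except that the bounded klt complement hypothesis directly supplies klt $D_s$ with uniform index $N$, so the construction above produces $(X,D)$ already klt and the same perturbation applies. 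Case (ii) reduces to case (iii) by using the very general hypothesis on $S$ to avoid the countable union of Zariski closed loci where the fiberwise complement would fail to be klt: among the finitely many irreducible components of the parameter space of $N$-complements, one isolates the component dominating $T$ whose general point is a klt complement, and then applies the argument of case (iii).
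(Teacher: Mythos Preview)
Your outline for the first part (constructing the lc complement) is morally close to the paper's, though the paper avoids your unjustified assumption that the fibers can be made $\bQ$-Gorenstein of bounded index: Fano type varieties need not have $\bQ$-Cartier canonical divisor at all, and there is no ``openness of the $\bQ$-Gorenstein property of bounded index'' to invoke. The paper instead works with the reflexive sheaf $\mO_X(-mK_X)$, lifts a single fiberwise lc complement via surjectivity of restriction, and then produces a small $\bQ$-factorial modification with klt singularities (Lemma~\ref{lem: small q-fact}) to reduce to the $\bQ$-factorial klt case. Your constructibility-in-$\bP(\mF)$ argument is more elaborate than needed but not wrong in spirit once the $\bQ$-Cartier issue is handled.

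The serious gap is in your treatment of case~(i). You assert that $-(K_X+\epsilon D)\sim_\bQ (1-\epsilon)D$ is \emph{nef} over $T$ because $-K_{X_s}$ is nef on the weak Fano fibers $X_s$, $s\in S$. But relative nefness cannot be read off a Zariski dense set of fibers: nefness is not an open condition in families, and after shrinking $T$ there is no reason every fiber becomes weak Fano. Without nefness, the implication ``$(X,\Delta)$ klt with $-(K_X+\Delta)$ big/$T$ $\Rightarrow$ $X$ Fano type/$T$'' is false in general, so your perturbation does not conclude. The paper's route for case~(i) is entirely different and constitutes the main technical content: one verifies the hypotheses of \cite[Proposition~7.13]{Birkar19} (Proposition~\ref{prop: klt complement}) for the family $\{X_s\}$ --- uniform $m$-complement, birationality of $|{-}mK_{X_s}|$, volume bound, and a uniform lct bound along $|{-}lK_{X_s}|$ obtained from a universal-family argument --- to conclude that $\{X_s\}$ admits bounded \emph{klt} complements, thereby reducing (i) to (iii). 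You have not supplied any substitute for this step.

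Your case~(ii) has a related problem: you look for klt complements inside the space of $N$-complements, where $N$ is the universal \emph{lc} complement index. There is no reason a Fano type $X_s$ should admit a klt $N$-complement for this particular $N$; a priori the klt complement index depends on $s$ and is unbounded. The paper instead uses the very general hypothesis to ensure that for some $s\in S$ the restriction $H^0(X_{U_m},-mK_X)\to H^0(X_s,-mK_{X_s})$ is surjective simultaneously for \emph{every} $m$, so that whatever index the klt complement on $X_s$ happens to have, it lifts. This is not the reduction to (iii) you sketch.
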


By the invariance of plurigenera, varieties of general type deform to varieties of general type (see \cite{HMX18}). The above theorem can be viewed as a weak analogy to this result for Fano type varieties. The proof of this natural theorem, however, requires deep results on complements as developed in \cite{Birkar19}.  With this result in mind, to study the deformation of cones of Fano type varieties in the above cases, we can always assume that each fiber is of Fano type after shrinking the base. It turns out that various cones exhibit nice behaviors in this family. As a first step, we show that, after a generically finite base change and a suitable shrinking of the base, the N\'eron-Severi space of each fiber coincides with the restriction of the N\'eron-Severi space of the total space.

\begin{theorem}\label{thm: deform pic}
Let $f: X \to T$ be a projective fibration. Suppose that $S\subset T$ is a Zariski dense subset such that for any $s\in S$, the fiber $X_s$ has rational singularities and
\[
H^1(X_s, \mO_{X_s})=H^2(X_s, \mO_{X_s})=0.
\] 
\begin{enumerate}
    \item If the natural restriction map $N^1(X/T) \to N^1(X_t)$ is surjective for very general $t\in T$, then there exists a non-empty open subset $T_0\subset T$, such that ${\mP}ic_{X_{T_0}/T_0} \otimes \bR$ is a constant sheaf in the Zariski topology.
    \item Up to a generically finite base change of $T$, there exists a non-empty open subset $T_0\subset T$, such that ${\mP}ic_{X_{T_0}/T_0} \otimes \bR$ is a constant sheaf in the Zariski topology. 
\end{enumerate} 
Moreover, in both of the above cases, for any open subset $U\subset T_0$, the natural restriction maps
\[
N^1(X_{T_0}/T_0) \to N^1(X_U/U) \to N^1(X_t)
\] are isomorphisms for any $t\in U$.
\end{theorem}

In the above theorem, ``up to a generically finite base change of $T$" means that we take a generically finite morphism $T' \to T$ and replace $T$ with $T'$. The non-empty open subset $T_0$ is then taken inside this new base $T'$. Hence, after possibly further shrinking $T_0$, we are indeed performing an \'etale base change of an open subset of the original base. 

The proof of the above result follows from an argument akin to \cite[\S 6]{dFH11}, where a similar result is established in a slightly different setting. Due to the monodromy phenomenon, even in a locally trivial family, there are no natural identifications between the N\'eron-Severi spaces of different fibers. Theorem \ref{thm: deform pic} enables comparisons between these spaces and their associated cones. 

The following fiber-wise small $\bQ$-factorial modification illustrates the necessity of considering families whose fibers have rational singularities and satisfy $h^i(X_s, \mO_{X_s}) = 0$ for $i = 1, 2$, rather than restricting only to varieties of Fano type. Indeed, these properties are preserved under resolutions, which may result in varieties that are no longer of Fano type. Such fiber-wise small $\bQ$-factorial modifications are essential in establishing the boundedness of moduli problems, although they have been used in the literature without proper justification.

\begin{theorem}\label{thm: fiberwise q-fact}
Let $f: X \to T$ be a projective fibration. Suppose that $S\subset T$ is a Zariski dense subset such that for any $s\in S$, the fiber $X_s$ has rational singularities and
\[
H^1(X_s, \mO_{X_s})=H^2(X_s, \mO_{X_s})=0.
\] 
Assume that there exists a divisor $\Delta$ such that $(X, \Delta)$ has klt singularities. Then, up to a generically finite base change of $T$, there exist a birational morphism $Y \to X$ and a non-empty open subset $T_0 \subset T$ such that $Y \to X$ and each fiber $Y_t \to X_t$ for $t \in T_0$ are small $\bQ$-factorial modifications.

Moreover, for any open subset $U\subset T_0$, the natural restriction maps
\[
\begin{split}
    &N^1(Y_{T_0}/T_0) \to N^1(Y_U/U) \to N^1(Y_t)\\
    &N^1(X_{T_0}/T_0) \to N^1(X_U/U) \to N^1(X_t)
\end{split}
\] are isomorphisms for any $t\in U$.
\end{theorem}

The following result establishes the generic deformation invariance of various cones up to a base change.  

\begin{theorem}\label{mainthm: decomposition}
Let $f: X \to T$ be a projective fibration. Suppose that there exists a Zariski dense subset $S\subset T$ such that for any $s\in S$, the fiber $X_s$ satisfies one of the cases in Theorem \ref{thm: FT is constructible}.
\begin{enumerate}
    \item If the natural restriction map $N^1(X/T) \to N^1(X_t)$ is surjective for very general $t\in T$, then there exists a non-empty open subset $T_0\subset T$, such that the N\'eron-Severi space $N^1(X_t)$ as well as the cones $\Eff(X_t)$, $\Nef(X_t)$ and $\Mov(X_t)$ are deformation invariant on $T_0$. Moreover, the Mori chamber decomposition on $\Mov(X_t)$ is also deformation invariant on $T_0$.
    \item Up to a generically finite base change of $T$, there exists a non-empty open subset $T_0\subset T$, such that  the N\'eron-Severi space $N^1(X_t)$ as well as the cones $\Eff(X_t)$, $\Nef(X_t)$ and $\Mov(X_t)$ are deformation invariant on $T_0$. Moreover, the Mori chamber decomposition on $\Mov(X_t)$ is also deformation invariant on $T_0$.
\end{enumerate} 
\end{theorem}

For the precise definition of the Mori chamber decomposition, see \eqref{eq: Mori chamber decomp1}. 

Shrinking bases is a generally allowed operation in the moduli problem, as we can apply Noetherian induction (see \cite[II, Exercise 3.16]{Har77}). Moreover, this operation is necessary even if $X \to T$ is a family of $\bQ$-factorial terminal Fano varieties (see \cite{Totaro12}). On the other hand, the nef cones are indeed locally constant (i.e., deformation invariant) when $f: X \to T$ is a smooth family of Fano manifolds (see \cites{Wis91, Wis09}). When $X \to T$ is a family of $\mathbb{Q}$-factorial terminal Fano varieties over a curve, \cite{dFH11} showed that the movable cones are locally constant. Under the same setting, the local constancy of the Mori chamber decompositions is also established for special families of Fano varieties, notably when $\dim(X/T) \leq 3$ (see counterexamples in \cite{Totaro12} for cases where $\dim(X/T) > 3$). Moreover, under various restrictions, the invariance of effective cones and movable cones is established in \cite{HX15}. \cite[\S 4]{Sho20} established similar results when $X$ is of Fano type over $T$. In fact, \cite[\S 4]{Sho20} showed that the sets of integral divisors associated with various cones remain invariant. The case of effective cones is also addressed in a recent preprint \cite{CHHX25}. These results are indispensable in the theory of complements.

Next, we explore the uniform behavior of the minimal model program (MMP) for a family of varieties. In the case where the fibers are surfaces, the stability of $(-1)$-curves and the simultaneous blowing down of $(-1)$-curves have long been established (see \cites{Kod63, Iit70}). We demonstrate the good behavior of the minimal model program for families of Fano type varieties. As a direct consequence, we obtain the deformation invariance of the Mori chamber decompositions of the movable cones. This circle of ideas has already been applied in \cites{dFH11, HX15}. In the general setting, we establish the following deformation invariance of the MMP.

\begin{theorem}\label{mainthm: mmp}
Let $f: X \to T$ be a projective fibration. Suppose that there exists a Zariski dense subset $S\subset T$ such that for any $s\in S$, the fiber $X_s$ satisfies one of the cases in Theorem \ref{thm: FT is constructible}. Then, after a generically finite base change of $T$, there exists a non-empty Zariski open subset $T_0 \subset T$ such that any Zariski open subset $U \subset T_0$ satisfies the following properties.
\begin{enumerate}
\item Suppose that $X'/U \in \bc(X_U/U)$. If $g: X' \dashrightarrow Y/U$ is a birational contraction with $Y$ a $\bQ$-factorial variety, then for any $t \in U$, the induced map $g_t: X'_t \dashrightarrow Y_t$ is again a birational contraction. Moreover, for any $\bR$-Cartier divisor $\mD$ on $X'$, we have
\[
(g_t)_*(\mD|_{X'_t}) \sim_{\bR} (g_*\mD)|_{Y_t}, \quad t \in U.
\]
\item For any $\bR$-divisor $\mD\in N^1(X_U/U)$, a sequence of $\mD$-MMP$/U$ induces a sequence of $\mD|_{X_t}$-MMP of the same type for each $t\in U$. 
\item Conversely, any $\bR$-divisor $D\in N^1(X_t)$ with $t\in U$, a sequence of $D$-MMP on $X_t$ is induced by a sequence of $\mD$-MMP$/U$ on $X_U/U$ of the same type. Moreover, we can choose $\mD$ such that $\mD|_{X_t}\sim_\bR D$. 
\item If $Y/U\in \bc(X_{U}/U)$, then the natural maps
\[
N^1(Y/U) \to N^1(Y_t), \quad \Nef(Y/U) \to \Nef(Y_t), \quad \NE(Y_t) \to \NE(Y/U)
\] are isomorphisms for any $t\in U$.
\end{enumerate}
\end{theorem} 

In the above theorem, $\bc(X_U/U)$ consists of all birational contractions from $X_U/U$, and the type of an MMP refers to whether each step is divisorial, flipping, or of fiber type. See Section \ref{subsec: stable boundedness} for the detailed definition.

As an immediate application of these results, we establish a boundedness result concerning Fano type varieties. Let $X \to T$ be a fixed family of Fano type varieties. Let 
\[
\mS \coloneqq \{X_t \mid t\in T\}
\]be a set consisting of closed fibers. For any $X_t\in \mS$, let ${\rm bcm}(X_t)$ consist of normal projective varieties which are birational contraction models of $X_t$, that is,
\[
{\rm bcm}(X_t)\coloneqq\{Y \mid \text{there exists a birational contraction~} X_t\dashrightarrow Y \}.
\] 
Set ${\rm bcm}(\mS)\coloneqq\cup_{X_t\in \mS} {\rm bcm}(X_t)$. Recall that a set of varieties is bounded if it can be parametrized by the fibers of a morphism between two schemes of finite type. We have the following result on boundedness of ${\rm bcm}(\mS)$:

\begin{theorem}\label{mainthm: bdd}
The set ${\rm bcm}(\mS)$ is bounded.
\end{theorem}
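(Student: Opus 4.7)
The strategy is Noetherian induction on $T$: it suffices to find a non-empty Zariski open subset $T_0\subset T$ such that $\bigcup_{t\in T_0}{\rm bcm}(X_t)$ is bounded, and then repeat with $T$ replaced by the closed subvariety $T\setminus T_0$. To set up $T_0$, first apply Theorem \ref{thm: FT is constructible} to shrink $T$ so that $X$ is of Fano type over $T$, then shrink further via Theorems \ref{mainthm: decomposition} and \ref{mainthm: mmp} so that (a) the N\'eron-Severi space $N^1(X_t)$, the movable cone $\Mov(X_t)$, and its Mori chamber decomposition are deformation invariant on $T_0$, and (b) every relative $\bR$-divisor on $X_{T_0}/T_0$ gives rise to an MMP$/T_0$ whose steps specialize fiberwise, and conversely every MMP on a fiber is induced by a relative MMP. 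Since Fano type varieties are Mori dream spaces by BCHM, the Mori chamber decomposition of $\Mov(X_{T_0}/T_0)$ has only finitely many closed cells of all dimensions, say $\mC_1,\ldots,\mC_N$.

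For each cell $\mC_i$, choose a relative $\bR$-divisor $\mD_i$ in its relative interior and run a $\mD_i$-MMP$/T_0$ on $X_{T_0}/T_0$ to obtain the ample model $g_i\colon X_{T_0}\dashrightarrow Y_i$ over $T_0$. By Theorem \ref{mainthm: mmp}(1), for every $t\in T_0$ the specialization $g_i|_{X_t}\colon X_t\dashrightarrow (Y_i)_t$ is itself a composition of MMP steps and realizes the ample model of $\mD_i|_{X_t}$; in particular $(Y_i)_t$ is exactly the birational contraction model of $X_t$ corresponding to $\mC_i$ under the deformation-invariance identification from Theorem \ref{thm: deform pic}. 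Conversely, for any $Y\in{\rm bcm}(X_t)$, pulling back an ample divisor from $Y$ produces a movable $\bR$-divisor $D$ on $X_t$ whose class lies in the relative interior of some cell; since $X_t$ is a Mori dream space, the ample model of $D$ depends only on that cell, so $Y\cong (Y_i)_t$ for some $i$. Theorem \ref{mainthm: mmp}(2) guarantees that this identification is induced by the relative MMP producing $Y_i$.

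Each morphism $Y_i\to T_0$ is projective of finite type, so $\{(Y_i)_t\mid t\in T_0\}$ is bounded, and the finite union over $i=1,\ldots,N$ provides a bounded parametrization of $\bigcup_{t\in T_0}{\rm bcm}(X_t)$. Noetherian induction then completes the proof. The principal obstacle is confirming that \emph{every} birational contraction model of $X_t$ is captured by some cell of the Mori chamber decomposition, rather than being a model that escapes the MMP framework: this relies crucially on the Mori dream space property of Fano type varieties (via BCHM), together with the deformation invariance of the Mori chamber decomposition and the two-way compatibility of MMP with specialization supplied by Theorems \ref{mainthm: decomposition} and \ref{mainthm: mmp}.
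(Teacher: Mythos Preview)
Your proposal is correct and takes essentially the same approach as the paper's first proof: Noetherian induction on $T$, shrinking so that $X/T$ is of Fano type, and then parametrizing all birational contraction models of fibers by the finitely many relative models in $\bc(X/T)$ via the relative MMP together with semi-ample contractions. The paper organizes the last step by first factoring a given $X_t\dashrightarrow Z$ through an SQM $\tilde Z$ (citing \cite[Lemma~2.2]{LZ22MK}) and then spreading $\tilde Z$ out over $T_0$ before contracting, whereas you enumerate the Mori chamber cells of $\Mov(X_{T_0}/T_0)$ up front; your implicit claim that the strict transform of an ample divisor on $Z$ is movable on $X_t$ is exactly that factorization through an SQM, which is part of the Mori dream space package you invoke.
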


Theorem \ref{mainthm: bdd} is proved following the guiding principle that boundedness of varieties can often be deduced from birational boundedness together with the finiteness of models. This strategy has been successfully applied to many boundedness problems (see \cites{HX15, HMX18, MST20, FHS20}, etc.). The proof of Theorem~\ref{mainthm: bdd} is essentially a restatement of the argument of \cite[Theorem~1.3]{HX15}, with additional clarification on how to obtain a fiber-wise $\bQ$-factorial modification.

Theorems~\ref{mainthm: decomposition}, \ref{mainthm: mmp}, and \ref{mainthm: bdd} are further generalized and applied in the subsequent paper \cite{CLLZ25}, where a hybrid of Mori dream spaces and Calabi-Yau type varieties (termed Morrison-Kawamata dream spaces) is investigated.

We outline the structure of the paper. Section \ref{sec: pre} provides the necessary background and sets up notation. Section \ref{sec: FT in family} is devoted to the proof of Theorem \ref{thm: FT is constructible}, which gives a partial answer to Question \ref{que: FT fiber to FT}. Section \ref{sec: deform cones} investigates the deformation invariance of N\'eron-Severi spaces and nef cones. Section \ref{sec: mmp} establishes fiber-wise small $\mathbb{Q}$-factorial modifications, the uniform behavior of the MMP for families of Fano type varieties, and the deformation invariance of effective cones and Mori chamber decompositions. Finally, Section \ref{sec: bdd FT} presents an application of the preceding results to the boundedness of birational contraction models of Fano type varieties. 

\subsection*{Acknowledgements}  
We would like to thank Christopher Hacon, Xiaowen Hu, Yifei Zhu and Ziquan Zhuang for answering our questions, and Guodu Chen for helpful discussions. We are also grateful to Stefano Filipazzi for helping us clarify the importance of taking base changes, and to Vyacheslav Shokurov for bringing our attention to his paper \cite{Sho20}. We sincerely thank the anonymous referee for many constructive suggestions, which have greatly improved the paper. S. Choi is partially supported by Samsung Science and Technology Foundation under Project Number SSTF-BA2302-03. Z. Li is partially supported by NSFC (No.12471041), the Guangdong Basic and Applied Basic Research Foundation (No.2024A1515012341), and a grant from SUSTech. C. Zhou is supported by a grant from Xiamen University (No. X2450214).

\section{Preliminaries}\label{sec: pre}

We introduce the necessary background materials. Along with this process, we fix the notation and terminology. 

A variety means an integral separated scheme of finite type over $\bC$. A point of a variety is understood to be a closed point and an open subset of a variety is meant to be a non-empty Zariski open subset, unless explicitly stated otherwise. For a variety $T$, by "shrinking $T$" we mean replacing $T$ with a Zariski open subset. A subset $S$ of a variety consists of very general points if there exist at most countably many non-empty open subsets $U_i$ such that $S = \cap U_i$.

A projective morphism $f: X \to S$ between normal varieties is called a fibration if it is surjective and $f_*\mO_X=\mO_S$. If $S' \to S$ is a morphism, then $X_{S'}$ denotes the fiber product $X \times_{S}S'$. Similarly, if $\mD$ is an $\bR$-Cartier divisor on $X$, then $\mD_{S'}$ denotes the pullback of $\mD$ to $X_{S'}$. A birational map $g: X \dasharrow X'$ between normal varieties is called a birational contraction if $g^{-1}$ does not contract any divisor.

Suppose that $\Delta \geq 0$ is an $\bR$-divisor on a normal variety $X$. Then $(X,\Delta)$ is called a log pair. A log pair $(X,\Delta)$ has klt singularities if $K_X+\Delta$ is $\bR$-Cartier where $K_X$ is the canonical divisor of $X$, and there exists a log resolution $\pi: Y \to X$ such that in the expression
\begin{equation}\label{eq: klt}
    K_Y=\pi^*(K_X+\Delta)+D,
\end{equation}
the coefficients of $D$ are greater than $-1$. Note that in \eqref{eq: klt}, $K_Y$ is chosen to be the unique Weil divisor on $Y$ such that $\pi_*K_Y=K_X$. Similarly, if the coefficients of $D$ are greater than or equal to $-1$, then $(X,\Delta)$ is said to have lc singularities. See \cite[\S 2.3]{KM98} for more detailed discussions. 

 Let $f: X \to T$ be a projective morphism between normal varieties. We use ``$/T$" to denote properties which are relative to $T$. Then $X$ is of Fano type$/T$ if there exists a divisor $\Delta$ such that $(X, \Delta)$ has klt singularities and $-(K_X+\Delta)$ is ample$/T$. Note that in the definition, we do not assume that $K_X$ is a $\bQ$-Cartier divisor. By passing to a small $\bQ$-factorization, it is straightforward to see that in the definition of Fano type variety, $\Delta$ can be chosen to be a $\bQ$-divisor. Moreover, if $K_X$ is a $\bQ$-Cartier divisor, then $X$ is of Fano type$/T$ if and only if there exists a $\bQ$-Cartier divisor $B$ which is big over $T$ such that $(X, B)$ has klt singularities and $K_X+B \sim_{\bQ} 0/T$. Besides, $X \to T$ is called a family of Fano type varieties if all of its closed fibers are Fano type varieties. A variety $X$ is called weak Fano if $-K_X$ is nef and big. Hence, a klt weak Fano variety is automatically of Fano type. For some $n \in \mathbb{N}$, a divisor $\Delta \in |-nK_X|$ is called an lc (resp. klt) $n$-complement of $K_X$ if $(X, \frac{1}{n}\Delta)$ has lc (resp. klt) singularities (see \cite{PS09} for a more general definition). A set of varieties $\mathcal{P}$ is said to admit bounded lc (resp. klt) complements if there exists an integer $n$, depending only on $\mathcal{P}$, such that every element of $\mathcal{P}$ has an lc (resp. klt) $n$-complement.

A Cartier divisor $D$ is movable$/T$ on $X/T$ if the codimension of the support of the sheaf ${\rm coker}(f^*f_*\mathcal O_X(D) \to \mathcal O_X(D))$ is greater than or equal to $2$. 
We list relevant vector spaces and cones which appear in this paper:

\begin{enumerate}
\item $\Pic(X/T)_{\bZ}$: the abelian group generated by Cartier divisors on $X$ modulo linear equivalences over $T$.
\item $\Pic(X/T)$: the $\bR$-vector space generated by Cartier divisors on $X$ modulo linear equivalences over $T$, which is the same as $\Pic(X/T)_{\bZ}\otimes_{\bZ} \bR$ . 

\item $N^1(X/T)$: the $\bR$-vector space generated by Cartier divisors on $X$ modulo numerical equivalences over $T$. 

\item $N_1(X/T)$: the $\bR$-vector space generated by curves that are contracted by $X\to T$ modulo numerical equivalences. 
 
\item $\NE(X/T)\subset N_1(X/T)$: the cone generated by curves that are contracted by $X\to T$ (its closure $\overline{\NE}(X/T)$ is called the Mori cone).

\item (nef cone) $\Nef(X/T)\subset N^1(X/T)$: the cone generated by nef Cartier divisors on $X$ over $T$.

\item (effective cone) $\Eff(X/T)\subset N^1(X/T)$: the cone generated by effective Cartier divisors on $X$ over $T$ (its closure $\bEff(X/T)$ is called the pseudo-effective cone).

\item (movable cone) $\Mov(X/T)\subset N^1(X/T)$: the cone generated by movable Cartier divisors on $X$ over $T$. 
\end{enumerate}

If $T=\Spec (\bC)$, then we will omit $T$ in the above notations. For simplicity, when we say that $D \in N^1(X/T)$ is a divisor, we mean that $[D] \in N^1(X/T)$ with $D$ an $\bR$-Cartier divisor. This convention applies to other cones as well. We use ${\rm Int}(C)$ to denote the relative interior of the cone $C$. A cone $C\subset N^1(X/T)$ is called a rational polyhedral cone if it is generated by finitely many rational rays (hence, it is a closed cone). In general, $\Pic(X/T)$ may be an infinite dimensional vector space. However, when $X/T$ is of Fano type, it is a finite-dimensional vector space. Besides, many of the above cones are neither open nor closed. Even worse, their closures may not be generated by rational rays. In the literature, there are various conventions regarding effective and movable cones (e.g., taking the closure of the cone or selecting rational elements within the cones). However, when $X/T$ is of Fano type, $\NE(X/T), \Nef(X/T), \Eff(X/T)$ and $\Mov(X/T)$ are all closed rational polyhedral cones. 
Since this paper focuses on Fano type varieties, we do not need to distinguish these variations in the definitions of cones.

\section{Families with fibers of Fano type varieties over Zariski dense subsets}\label{sec: FT in family}

The purpose of this section is to prove Theorem \ref{thm: FT is constructible}. We begin by establishing a preparatory lemma.

\begin{lemma}\label{lem: small q-fact}
Let $X \to T$ be a projective fibration. Assume that 
\begin{enumerate}
\item there exists a Zariski dense subset $S \subset T$ such that $X_s$ is of Fano type for each $s\in S$, and
\item there exists $\Delta\geq 0$ such that $(X, \Delta)$ is lc and $K_X+\Delta\sim_{\bQ}0/T$.
\end{enumerate}
Then, after shrinking $T$, there exists a small $\bQ$-factorial modification $Z \to X$ such that $Z$ has klt singularities.
\end{lemma}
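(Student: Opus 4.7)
The plan is to replace $X$ by a smooth birational modification $\pi\colon Y\to X$, upgrade $Y$ to a Fano type variety over $T$ via bigness of the crepant boundary, and then contract the $\pi$-exceptional divisors by a relative MMP over $X$ to land on the desired small $\bQ$-factorial modification $Z$. After taking a log resolution $\pi\colon Y\to X$ of $(X,\Delta)$ and shrinking $T$ so that $Y\to T$ is smooth (by generic smoothness), the crepant pullback gives
\[
K_Y+\Gamma=\pi^{*}(K_X+\Delta),\qquad K_Y+\Gamma\sim_{\bQ}0/T,
\]
where $\Gamma=\pi^{-1}_{*}\Delta+\sum c_iE_i$ has coefficients in $[0,1]$ by the lc hypothesis on $(X,\Delta)$.

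The key step is to prove that $\Gamma$ is big over $T$. For $s\in S$, the Fano type hypothesis on $X_s$ makes $-K_{X_s}$ big; combined with $K_{X_s}+\Delta_s\sim_{\bQ}0$ (from general-fiber adjunction), the Weil divisor $\Delta_s$ is big on $X_s$. Bigness transfers through the birational morphism $Y_s\to X_s$ (after passing to a small $\bQ$-factorialization of $X_s$ if necessary to handle the failure of $\bQ$-Cartierness), so the strict transform of $\Delta_s$ is big on $Y_s$, which makes $\Gamma|_{Y_s}=(\text{strict transform of }\Delta_s)+(\text{effective exceptional})$ big. Choosing $s_0\in S$ over which $Y\to T$ is smooth, Lemma~\ref{lem: fiber big implies global big} then forces $\Gamma$ to be big over $T$.

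With $\Gamma$ big over $T$, I would write $\Gamma\sim_{\bQ}A+E$ with $A$ relatively ample over $T$ and $E\geq0$ (chosen generically so the configuration stays snc). For sufficiently small $\delta>0$, the pair $(Y,(1-\delta)\Gamma+\delta E)$ is klt, and
\[
-\bigl(K_Y+(1-\delta)\Gamma+\delta E\bigr)\sim_{\bQ}\delta A/T
\]
is ample over $T$. Hence $Y$ is of Fano type over $T$, and a fortiori over $X$ since relative ampleness over $T$ implies relative ampleness over the intermediate $X$. I would conclude by running an MMP on $Y$ over $X$ chosen so as to contract all $\pi$-exceptional divisors; termination is automatic by BCHM via the Fano type structure of $Y/X$. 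The resulting $Z\to X$ is small, and $Z$ is $\bQ$-factorial and klt by the usual preservation properties of the klt MMP.

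The hardest step will be establishing the bigness of $\Gamma$ over $T$: since $X_s$ need not be $\bQ$-factorial, $\Delta_s$ is only a Weil divisor and bigness must be interpreted via global sections, with the transfer from $X_s$ to $Y_s$ handled by first passing to a small $\bQ$-factorialization. A secondary delicacy is the choice of MMP in the final step, which must be arranged so that precisely the $\pi$-exceptional divisors are contracted; this is controlled by the cone theorem over $X$, which applies thanks to the Fano type structure of $Y/X$.
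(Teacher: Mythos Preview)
There are two genuine gaps.

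First, your claim that the crepant boundary $\Gamma$ has coefficients in $[0,1]$ is false. The lc hypothesis on $(X,\Delta)$ only forces the coefficients of $\Gamma$ to be $\le 1$; exceptional divisors of the log resolution with positive discrepancy appear in $\Gamma$ with negative coefficient. So $\Gamma$ is not in general effective, and the pair $(Y,(1-\delta)\Gamma+\delta E)$ is not a legitimate klt boundary. This can be repaired by replacing the log resolution with a $\bQ$-factorial dlt modification, which is precisely what the paper does: on a dlt modification $h\colon W\to X$ one has $K_W+\Delta_W+\Exc(h)=h^*(K_X+\Delta)$ with all coefficients in $[0,1]$.

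Second, and more seriously, your final step is not justified. You assert that an MMP on $Y$ over $X$ ``chosen so as to contract all $\pi$-exceptional divisors'' exists, but you neither specify the MMP nor explain why every exceptional divisor gets contracted. This is the entire content of the lemma: because $(X,\Delta)$ is only lc, $X$ need not admit any small $\bQ$-factorial klt modification in general (think of a cone over an elliptic curve), and the standard BCHM small $\bQ$-factorialization is unavailable. Assumption~(1) is what rules this out, and it must be used at this step --- not merely to prove bigness of $\Gamma$ over $T$. The paper runs a $(K_W+\Exc(h))$-MMP over $X$, terminating by \cite{HX13}, and then spends the bulk of the argument proving that the output $Z\to X$ is small. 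The key geometric input is that, after shrinking $T$, the Fano type hypothesis on $X_s$ forces $X$ to be klt away from $\Supp(\Delta)$, hence $h(\Exc(h))\subset\Supp(\Delta)$; from there a cutting-by-hyperplanes and intersection-number contradiction shows no exceptional divisor survives on $Z$. Your outline does not engage with this mechanism at all.

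Finally, note that the bigness detour you flag as the hardest step is absent from the paper's proof: since the MMP is run over $X$, and termination comes from \cite{HX13} via $K_W+\Delta_W+\Exc(h)\sim_{\bQ}0/X$, there is no need to establish that $W$ (or $Y$) is of Fano type over $T$.
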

\begin{proof}
Let $h: W \to X$ be a $\bQ$-factorial dlt modification of $(X, \Delta)$ (see \cite{BCHM10}) such that
\[
K_W+\Delta_W+\Exc(h)=h^*(K_X+\Delta),
\] where $\Delta_W$ is the strict transform of $\Delta$, and $\Exc(h)$ is the union of $h$-exceptional divisors (with coefficient $1$). Hence, any $h$-exceptional divisor is an lc place of $(X, \Delta)$. Run a $(K_W+\Exc(h))$-MMP/$X$, 
\[
\phi: W \dasharrow Z/X,
\] which terminates at $Z$ with $K_Z+\phi_*\Exc(h)$ nef over $X$ (see \cite{HX13}). We claim that $Z \to X$ is a small morphism after shrinking $T$. 

This is straightforward if $K_X$ is $\bQ$-Cartier: as $X_s$ is of Fano type for any $s\in S$, we see that $X_s$ has klt singularities (see \cite[Theorem 9.5.19]{Laz04b}). Therefore, $X$ still has klt singularities after shrinking $T$. This means that the lc centers of $(X, \Delta)$ are contained in $\Supp(\Delta)$. Hence, we have $\Delta_W+E=h^*\Delta$ with $\Supp(E) = \Exc(h)$. This implies that
\[
K_W+\Exc(h)=h^*K_X+E,
\] and any $(K_W+\Exc(h))$-MMP/$X$ will contract $E$. In particular, $Z \to X$ is a small morphism. In the sequel, we explain that the same property still holds even if $K_X$ is not $\bQ$-Cartier.

First, after shrinking $T$, we still have $h(\Exc(h)) \subset \Supp(\Delta)$. This is because $K_X$ is $\bQ$-Cartier on $X \backslash \Supp(\Delta)$.  As for any $s\in S$, the fiber $X_s$ is of Fano type with $S$ a Zariski dense subset, we see that $X \backslash \Supp(\Delta)$ is klt after shrinking $T$. This implies that $h(\Exc(h)) \subset \Supp(\Delta)$. 

Suppose that we have $\phi_*\Exc(h) \neq 0$. As $h(\Exc(h))\subset  \Supp(\Delta)$, we have $q(\phi_*\Exc(h)) \subset \Supp(\Delta)$, where $q: Z \to X$. We can cut $X$ by sufficiently ample and general hypersurfaces $H_i \subset X$ for $i=1,\ldots,l$ until 
\[
q\left(\phi_*\Exc(h)\right) \cap H_1\cap \cdots \cap H_l
\] consists of just points. We use the same notation to denote the varieties and the divisors after this cutting. Note that as these hypersurfaces are sufficiently ample, $K_Z+\phi_*\Exc(h)$ is still nef over $X$. Choose any $x_0 \in q\left(\phi_*\Exc(h)\right) \cap H_1\cap \cdots \cap H_l$. Then $q^{-1}(x_0)$ is of positive dimension. To be precise, we write 
\[
q^{-1}(x_0) = F \cup F',
\] where $F$ consists of $q$-exceptional divisors, and $F'$ consists of higher codimensional subvarieties. Let $\Delta_Z = \phi_*\Delta_W$. As $x_0 \in q(\phi_*\Exc(h)) \subset \Supp(\Delta)$, and $q$ has connected fibers, we see that 
\[
q^{-1}(x_0) \cap \Supp(\Delta_Z) \neq \emptyset.
\] Thus, there exists a curve $\ell \subset q^{-1}(x_0)$ such that 
\[
\Supp(\Delta_Z) \cap \ell \neq \emptyset \quad\text{and} \quad \ell \not\subset \Supp(\Delta_Z).
\] This is obvious if $F' \not\subset \Supp(\Delta_Z)$. If $F' \subset \Supp(\Delta_Z)$, then as $\Supp(\Delta_Z)$ does not contain $q$-exceptional divisors and $q^{-1}(x_0)$ is connected, we can find a curve $\ell \subset F$ satisfying the desired property. However, as
\[
K_Z+\Delta_Z+\phi_*\Exc(h)=q^*(K_X+\Delta),
\] and $K_Z+\phi_*\Exc(h)$ is nef$/X$, we have
\[
0<(K_Z +\phi_*\Exc(h))\cdot \ell+\Delta_Z \cdot \ell = q^*(K_X+\Delta) \cdot \ell =0.
\] This is a contradiction. Therefore, $Z \to X$ is a small morphism. Moreover, $Z$ has klt singularities as $(Z, \phi_*\Exc(h))$ has dlt singularities. This shows that $Z \to X$ is a small $\bQ$-factorial modification of $X$ and $Z$ has klt singularities. 
\end{proof}

We can now prove Theorem \ref{thm: FT is constructible} which is a partial answer to Question \ref{que: FT fiber to FT}.

\begin{proof}[Proof of Theorem \ref{thm: FT is constructible}]
We proceed with the argument in several steps. In Steps 1 and 2, we apply uniform modifications to all three cases and then treat each case separately. In what follows, after shrinking $T$, we still use $S$ to denote the Zariski dense subset $S \cap T$.

Step 1. As for any $s\in S$, the fiber $X_s$ is a normal variety, it is regular in codimension $1$ and satisfies Serre's condition $S_2$. Shrinking $T$, we can assume that $X$ is regular in codimension $1$. By \cite[(9.9.2)(viii)]{EGAIV},  the set of points (not necessarily closed points)
\[
E_2 \coloneqq \{p \in S \mid \mO_X|_{X_p} \text{~is~} S_2\}
\]
is a constructible set.  As $S$ is Zariski dense and $S\subset E_2$, the generic point of $T$ lies in $E_2$. Therefore, after shrinking $T$, we can assume that $X$ is $S_2$ and $T$ is normal. This shows that $X$ is normal, and thus $f$ is a projective contraction.

Step 2. In this step, we show that in each of the three cases, $X$ can be further assumed to be $\bQ$-factorial with klt singularities. More precisely, assuming that there exists a Zariski dense subset $S \subset T$ such that $X_s$ is of Fano type for each $s \in S$ (which is automatically satisfied in the three cases), we prove that $X$ admits a small $\bQ$-factorial modification with klt singularities after shrinking $T$. Note that in this step, we only need to assume that $S$ is Zariski dense.

First, we claim that there exist an $m\in \bZ_{>0}$ independent of $s\in S$ and a Weil divisor $B^s \in |-mK_{X_s}|$ such that $(X_s, \frac 1 m B^s)$ has lc singularities for any $s\in S$. As $X_s$ is of Fano type, we can replace it with a small $\bQ$-factorial modification $Y_s \to X_s$. Then $Y_s$ is still of Fano type, and it suffices to show the claim for $Y_s$. Therefore, we can assume that $X_s$ is $\bQ$-factorial. Running a $(-K_{X_s})$-MMP, $X_s \dasharrow X'_s$, we have that $X'_s$ is a Fano type variety with $-K_{X'_s}$ nef. By \cite[Theorem 1.7]{Birkar19}, there exist an $m\in \bZ_{>0}$  independent of $s\in S$ and a Weil divisor $D^s \in |-mK_{X'_s}|$ such that $(X'_s, \frac 1 m D^s)$ has lc singularities. Let $p: W \to X_s, q: W \to X'_s$ be a resolution of $X_s \dasharrow X'_s$.  Then we have
\[
mp^*K_{X_s} =mq^*K_{X_s'}-F
\] for some $q$-exceptional divisor $F \geq 0$. Hence, we have
\[
mp^*K_{X_s} +F+q^*D^s = q^*(mK_{X_s'}+D^s)\sim 0.
\] In particular, $F+q^*D^s$ is an effective Weil divisor. Set
\[
B^s= p_*(F+q^*D^s).
\] We have $B^s \in |-mK_{X_s}|$. As
\[
p^*(mK_{X_s}+B^s)=q^*(mK_{X_s'}+D^s)\sim 0,
\] we see that $(X_s, \frac 1 m B^s)$ still has lc singularities.

Next, we show that there exists a divisor $B$ such that $(X, \frac 1 m B)$ is lc and $mK_X+ B \sim 0/T$. Consider the coherent sheaf $\mO_X(-mK_X)$. By generic flatness, we can assume that $\mO_X(-mK_X)$ is flat over $T$ after shrinking $T$. Replacing $T$ by an open affine subset, we can assume that
\[
H^0(T, f_*\mO_X(-mK_X)) \to H^0(X_t, \mO_X(-mK_X)|_{X_t})
\] is surjective for any $t\in T$. This uses \cite[III, Theorem 12.8, Corollary 12.9]{Har77}, and the fact that the coherent sheaf $f_*\mO_X(-mK_X)$ is globally generated over an affine set. Shrinking $T$ further, we have $\mO_X(-mK_X)|_{X_t} = \mO_{X_t}(-mK_{X_t})$ for each $t\in T$. In summary, we obtain a surjection
\begin{equation}\label{eq: surj}
H^0(T, f_*\mO_X(-mK_X)) \to H^0(X_t, \mO_{X_t}(-mK_{X_t})), \quad t\in T.
\end{equation} 

Since there exists $B^s\in |-mK_{X_s}|$ such that $(X_s, \frac 1 m B^s)$ is lc for any $s\in S$, by \eqref{eq: surj}, let $B\in |-mK_X|$ be the element which maps to $B^s$, then $(X, \frac 1 m B)$ is lc in a Zariski neighborhood of $X_s$ (see \cite[Theorem 9.5.19]{Laz04b}). Shrinking $T$, we have that $(X, \frac 1 m B)$ is lc and $mK_X+ B \sim 0/T$. This shows the first part of the claim.

To complete the remaining part of the claim, we explain that it suffices to assume that $X$ is a $\bQ$-factorial klt variety. By Lemma \ref{lem: small q-fact}, after shrinking $T$, $X$ admits a small $\bQ$-factorial modification $W \to X$ such that $W$ has klt singularities. Shrinking $T$ further, we may assume that $W_t \to X_t$ is a small modification for each $t \in T$. In particular, the three cases remain valid after replacing $X$ with $W$. Moreover, if $W$ is of Fano type over $T$, then $X$ is of Fano type over $T$. Replacing $X$ by $W$, we can assume that $X$ is a $\bQ$-factorial klt variety. 

Step 3. In this step, we show the claim in Case (ii). Just as \eqref{eq: surj}, for each $m\in \bN$, there exists a non-empty open affine subset $U_m\subset T$ such that
\[
H^0(U_m, f_*\mO_X(-mK_X)) \to H^0(X_t, \mO_{X_t}(-mK_{X_t}))
\] is surjective for any $t\in U_m$. As $S$ consists of very general points, we have $S \cap (\cap_{m\in \bN} U_m) \neq \emptyset$. Therefore, if $s\in S \cap (\cap_{m\in \bN} U_m)$, then for any $m\in \bN$, there exists an open subset $U_m$ such that the natural map
\[
H^0(U_m, f_*\mO_X(-mK_X)) \to H^0(X_t, \mO_{X_s}(-mK_{X_s}))
\] is surjective. As $X_s$ is of Fano type, there exists some $m \in \bN$ and a divisor $D \in |-mK_{X_s}|$ such that $(X_s, \frac{1}{m} D)$ has klt singularities. By the above surjection, there exists a divisor $\mD \in H^0(X_{U_m}, \mO_X(-mK_X))$ such that $\mD|_{X_s}=D$. As $(X_s, \frac{1}{m} D)$ is klt, $(X, \frac 1 m \mD)$ is klt in a Zariski neighborhood of $X_s$ (see \cite[Theorem 9.5.19]{Laz04b}). Shrinking $T$, we can assume that $(X, \frac 1 m \mD)$ is klt. As $X$ is $\bQ$-factorial and $X_s$ is of Fano type, we have that $\mD_s$ is big for any $s\in S$. As $S$ consists of very general points, $\mD$ is big over $T$ (see \cite[Theorem 3.18]{Li22}). Therefore, $X$ is of Fano type over $T$.

Step 4. In this step, we show that Case (i) can be reduced to Case (iii). Shrinking $T$, we can assume that $K_X|_{X_s}=K_{X_s}$. As $X$ is $\bQ$-factorial, there exists $n\in \bN$ such that $nK_{X_s}$ is Cartier for each $s\in S$. As $-K_{X_s}$ is nef and big, by the effective base-point free theorem (\cite{Kol93}), there exists $m\in \bN$ such that $|-mK_{X_s}|$ is base-point free for each $s\in S$. By Bertini theorem, a general element $B \in |-mK_{X_s}|$ is such that the pair $(X_s, \frac{1}{m} B)$ has klt singularities. This shows that $\{X_s \mid s\in S\}$ admits bounded klt complements.

Step 5. In this step, we establish the claim for Case (iii), which consequently implies the result for Case (i) as well.

Recall that $X$ is a $\bQ$-factorial variety with klt singularities by Step 2. By assumption, there exists an integer $n\in \bN$, such that for any $s\in S$, the fiber $X_s$ admits a klt $n$-complement. As in \eqref{eq: surj}, possibly shrinking $T$, the natural restriction map
\[
H^0(X, \mO_{X}(-nK_X)) \to H^0(X_s, \mO_{X_s}(-nK_{X_s}))
\] is surjective for each $s\in S$. Therefore, if $\Delta_s\in  |-nK_{X_s}|$ such that $(X_s, \frac 1 n \Delta_s)$ has klt singularities, then there exists $\Delta\in |-nK_X|$ such that $(X, \frac 1 n \Delta)$ has klt singularities in a Zariski neighborhood of $X_s$ (see \cite[Theorem 9.5.19]{Laz04b}). Shrinking $T$, we can assume that $(X, \frac 1 n \Delta)$ has klt singularities with $K_X+\frac 1 n \Delta\sim_\bQ 0$. Hence, there exists $\epsilon \in \bQ_{>0}$ such that $(X, (\frac 1 n +\epsilon)\Delta)$ still has klt singularities. By \cite[Theorem 4.2]{HMX18}, after shrinking $T$, we know that the Kodaira dimension $\kappa(X_t, K_{X_t}+(\frac 1 n +\epsilon)\Delta_t)$ is independent of $t\in T$. As $K_X+(\frac 1 n +\epsilon)\Delta \sim_\bQ \epsilon n(-K_X)$, we see that $-K_X$ is big over $T$. This implies that $X$ is of Fano type over $T$.
\end{proof}

In analogy to Question \ref{que: FT fiber to FT}, we ask:
\begin{question}
Let $f: X \to T$ be a projective fibration. Suppose that there exists a Zariski dense subset $S \subset T$ such that $X_s$ is a Mori dream space for each $s \in S$. Is it true that $X$ is a Mori dream space over $T$ after shrinking $T$?
\end{question}

\section{Deformation of N\'eron-Severi spaces and nef cones}\label{sec: deform cones}

In this section, we study the deformation of N\'eron-Severi spaces and various cones for a family of varieties under certain conditions.

\subsection{Deformation of N\'eron-Severi spaces}\label{subsec: deform NS space}

Let $f: X \to T$ be a projective morphism such that $f_*\mO_X =\mO_T$. Let ${\mP}ic_{X/T}$ be the sheaf associated to the relative Picard functor
\[
S \mapsto \Pic(X_S)_{\bZ}/\Pic(S)_{\bZ}=\Pic(X_S/S)_{\bZ},
\] where $S\subset T$ is a Zariski open subset. See \cite[\S 9.2]{Kle05} for details. Note that ${\mP}ic_{X/T}$ is denoted by $\Pic_{(X/T)(\rm{zar})}$ in \cite[Definition 9.2.2]{Kle05}. In general, ${\mP}ic_{X/T}(U)$ may not be $\Pic(X_U/U)_{\bZ}$ for an open subset $U\subset T$ because of the sheafification. In fact, by \cite[(9.2.11.2)]{Kle05}, we always have
\begin{equation}\label{eq: R1}
{\mP}ic_{X/T}(U) = H^0(U, R^1f_*\mO^*_{X_U}).
\end{equation} If $X \to T$ is a flat morphism, then by \cite[(9.2.11.3)]{Kle05}, we have the exact sequence
\[
0 \to \Pic(U)_{\bZ} \to \Pic(X_U)_{\bZ} \to {\mP}ic_{X/T}(U). 
\] In particular, we have the inclusion 
\begin{equation}\label{eq: inclusion of Pic to functor}
\Pic(X_U/U)_{\bZ} \hookrightarrow {\mP}ic_{X/T}(U).
\end{equation}

In the following, we show that if $X\to T$ is a family of Fano type varieties, then there exists a generically finite base change $T' \to T$ and an open subset $U\subset T'$, such that ${\mP}ic_{X_U/U} \otimes \bR$ is a constant sheaf. To be precise, this means that there exist natural isomorphisms
\[
{\mP}ic_{X_U/U}(V) \otimes \bR \simeq  \Pic(X_V/V) \simeq N^1(X_V/V)
\] for any open subset $V\subset U$. Recall that by our convention, $\Pic(X/T)$ is the $\bR$-vector space generated by Cartier divisors on $X$ modulo linear equivalences over $T$, which is the same as $\Pic(X/T)_{\bZ}\otimes_{\bZ} \bR$. In fact, only certain vanishing properties are needed to ensure the statement holds. Theorem \ref{thm: deform pic} provides a more general formulation.

We need several lemmas in the proof of Theorem \ref{thm: deform pic}.

First, recall the definition of the triviality of a map (see \cite[\S 11]{Mat22} for details). Consider a diagram of spaces and mappings in the analytic topology:

\[
\begin{tikzcd}
X \arrow[rr, "f"] \arrow[dr, "\pi_1"'] & & Y \arrow[dl, "\pi_2"] \\
& Z. &
\end{tikzcd}
\]
We say that $f$ is trivial over $Z$ if there exist spaces $X_0$ and $Y_0$, a mapping $f_0: X_0 \to Y_0$, and homeomorphisms 
\[
X \cong X_0 \times Z, \quad Y \cong Y_0 \times Z
\]
such that the following diagram is commutative:
\[
\begin{tikzcd}
X \arrow[rr, "f"] \arrow[dd, "\cong"'] \arrow[dr, "\pi_1"] & & Y \arrow[dd, "\cong"] \arrow[dl, "\pi_2", swap] \\
& Z &\\
X_0 \times Z \arrow[rr, "f \times \mathrm{id}_Z"'] \arrow[ru] && Y_0 \times Z. \arrow[ul]
\end{tikzcd}
\]
We say that $f$ is locally trivial over $Z$ if, for any point $z \in Z$, the above property holds in some analytic neighborhood of $z$.

The following local triviality of the morphism is likely well-known to experts. For completeness, we explain how it can be deduced from the literature. 

\begin{lemma}\label{lem: trivial morph}
    Let $h:Y \to X$ and $X \to T$ be projective fibrations of varieties. Then there exists a Zariski open subset $U \subset T$ such that $h_U: Y_U \to X_U$ is locally trivial over $U$ (in the analytic topology).
\end{lemma}
\begin{proof}
    After shrinking $T$, we can assume that there exists a Whitney-Thom stratification of $h$ (in the Zariski topology), which satisfies Thom's  $A_h$-condition (see \cite[Page 247, Theorem 2]{Hir77} or the proof of \cite[Page 248, Corollary 1]{Hir77}). Shrinking $T$ further, by Thom’s second isotopy lemma (see \cite[Proposition 11.1]{Mat22}), we can assume that the morphism $h$ is locally trivial over $T$. 
\end{proof}

\begin{lemma}\label{lem: trivial div}
Let $g: Y \to X$ be a projective morphism between normal varieties with $R^1g_{*}\mO_{Y}=0$. If $D$ is a $\mathbb{Q}$-Cartier divisor on $Y$ with $D \equiv 0/X$, then $D\sim_\bQ 0/X$.

In particular, suppose that $g$ is a birational morphism and that $(X, \Delta)$ is a klt pair for some effective $\mathbb{R}$-divisor $\Delta$ on $X$. If $D$ is a $\mathbb{Q}$-Cartier divisor on $Y$ with $D \equiv 0/X$, then $g_*D$ is a $\mathbb{Q}$-Cartier divisor on $X$ such that $g^*(g_*D) = D$.
\end{lemma}

\begin{proof}
The argument below closely follows the proof of \cite[Proposition (12.1.4)]{KM92} and strengthens it. For the reader’s convenience, we include the full proof.

Let $x\in X$ be a point. Then there exists a triangulation of the topological space $Y$ such that the analytic subset $g^{-1}(x)$ appears as the support of a sub-complex (see \cite[I, Theorem (8.8)]{BHPV04}). Let $x\in U$ be a sufficiently small Stein neighborhood. Then $g^{-1}(U)$ deformation retracts onto the sub-complex $g^{-1}(x)$ (see \cite[Appendix, Proposition A.5]{Hat00}). In particular, we have
\[
H^i(g^{-1}(U), \bZ)=H^i(g^{-1}(x), \bZ), \quad i \in \bZ_{\geq 0}.
\] Let $(-)^{\rm an}$ denote the analytification functor. As $R^1g^{\rm an}_*\mO^{\rm an}_Y=(R^1g_*\mO_Y)^{\rm an}=0$ (see \cite[\textsc{Expos\'e} XII, Th\'eor\`eme 4.2]{SGA1}) and $H^1(U,\mO^{\rm an}_X)=0$, we have
\[
H^1(g^{-1}(U), \mO^{\rm an}_Y)=0.
\] By the exponential sequence, we have
\[
H^1(g^{-1}(U), \mO^{\rm an}_Y) \to H^1(g^{-1}(U),\mO_Y^{\rm an,*}) \to H^2(g^{-1}(U),\bZ),
\] and thus induces the natural injective group homomorphism
\[
{\Pic}(g^{-1}(U)) \hookrightarrow H^2(g^{-1}(U),\bZ)=H^2(g^{-1}(x),\bZ).
\] If $D \equiv 0/X$, then $D|_{g^{-1}(x)} \equiv 0$ and thus $D|_{g^{-1}(U)}$ is a torsion element of $H^2(g^{-1}(x),\bZ)$. If $d$ is the order of the torsion group of $H^2(g^{-1}(x),\bZ)$, then $dD|_{g^{-1}(U)}=0 \in \Pic(g^{-1}(U))$. Note that there are only finitely many topological types of the fibers of $g$ (for example, using Lemma \ref{lem: trivial morph}). We can take $d\in \bN$ sufficiently divisible, such that for each $x'\in X$, there exists an analytic neighborhood $x'\in U'$ such that $dD|_{g^{-1}{(U')}} \sim 0$ on $g^{-1}{(U')}$. As $g^{\rm an}_*\mO^{\rm an}_Y(dD) \simeq (g_*\mO_Y(dD))^{\rm an}$ (see \cite[\textsc{Expos\'e} XII, Th\'eor\`eme 4.2]{SGA1}), we have that $g_*\mO_Y(dD)$ is a line bundle on $X$. By the natural sheaf morphism, $g^*g_*(\mO_Y(dD)) \to \mO_Y(dD)$, we have the natural sheaf morphism (in the Zariski topology)
\[
\theta: \mO_Y \to \mO_Y(dD) \otimes (g^*g_*(\mO_Y(dD)))^{\vee}.
\] On $g^{-1}(U')$, as $g^{\rm an,*}g^{\rm an}_*(\mO^{\rm an}_Y(dD)) \to \mO^{\rm an}_Y(dD)$ is surjective and $\mO^{\rm an}_Y$ is torsion free, we see that $\theta^{\rm an}|_{g^{-1}(U')}$ is an isomorphism. Thus $\theta^{\rm an}$ is an isomorphism. This implies that $\theta$ is an isomorphism (for example see \cite[\textsc{Expos\'e} XII, Th\'eor\`eme et d\'efinition 1.1]{SGA1}). This shows $D \sim_{\bQ} 0/X$.

Next, assume that $g$ is birational and there exists a divisor $\Delta$ such that $(X, \Delta)$ has klt singularities. Replacing $Y$ by a resolution, we can assume that $Y$ is smooth. As klt singularities are rational singularities, we have $R^1g_*\mO_Y=0$. Hence, we have $D \sim_\bQ 0/X$ by the previous discussion. This implies that $g_*D$ is a $\bQ$-Cartier divisor on $X$ and $D=g^*(g_*D)$ by the negativity lemma. This completes the proof.

Finally, we include an alternative proof of this special case below, which is more algebraic in nature.

Let $h: \tilde X \to X$ be a small $\bQ$-factorial modification. Then we have
    \[
    K_{\tilde X}+\tilde \Delta=h^*(K_X+\Delta),
    \] where $\tilde \Delta$ is the strict transform of $\Delta$. Replacing $Y$ by a higher model, we can assume that $g$ factors through $h$, and let $u: Y \to \tilde X$ be the corresponding morphism. We have $D \equiv 0/\tilde X$. Thus $\tilde B \coloneqq u_*D$ is a $\bQ$-Cartier divisor as $\tilde X$ is $\bQ$-factorial and $D =u^*(u_*D)$ by the negativity lemma. In particular, we have $\tilde B\equiv 0/X$. By the base-point free theorem, $\tilde B$ is semi-ample over $X$. Hence $g_*D=h_*\tilde B$ is a $\bQ$-Cartier divisor on $X$ and $h^*(h_*\tilde B)=\tilde B$. Thus, we have $g^*(g_*D)=u^*(h^*(h_*\tilde B))=D$.
\end{proof}

\begin{remark}
    Lemma \ref{lem: trivial div} does not hold for an arbitrary birational contraction $g$. Assume that $X$ is a normal projective surface which is not $\bQ$-factorial. Let $H$ be a Weil divisor on $X$ which is not $\bQ$-Cartier. For a resolution $g: Y \to X$, one can define the Mumford pull-back of $H$, denoted by $g^*H$. Then we have $g^*H \equiv 0/X$ by construction. However, we do not have $g^*H \sim_{\bQ} 0/X$, since this would imply that $H$ is $\bQ$-Cartier. Lemma \ref{lem: trivial div} is consistent with the fact that for projective surfaces, rational singularities are $\mathbb{Q}$-factorial (see \cite[Theorem 4.6]{Bad01}).
\end{remark}

\begin{proof}[Proof of Theorem \ref{thm: deform pic}]
We first prove Theorem \ref{thm: deform pic} (1), and then show that Theorem \ref{thm: deform pic} (2) can be deduced from Theorem \ref{thm: deform pic} (1). From now on, we assume that the natural restriction map 
\begin{equation}\label{eq: surj of natural map}
 N^1(X/T) \to N^1(X_t), \quad [D] \mapsto [D|_{X_t}]   
\end{equation}
is surjective for very general $t\in T$.

Shrinking $T$, by \cite[III, Theorem 12.8, Corollary 12.9]{Har77}, we can assume that
\begin{equation}\label{eq: R1,2}
R^1f_*\mO_X=R^2f_*\mO_X=0 \text{~and~} H^1(X_t, \mO_{X_t})=H^2(X_t, \mO_{X_t})=0~ \forall ~t\in T.
\end{equation} Shrinking $T$ further, we can assume that $T$ is smooth. Then, we have 
\begin{equation}\label{eq: Pic=N}
\Pic(X_U/U) =N^1(X_U/U) \text{~for any open~}U \subset T \text{~and~}
\Pic(X_t)=N^1(X_t) ~\forall ~t\in T.
\end{equation} 
Indeed, $\Pic(X_t)=N^1(X_t)$ follows from $H^1(X_t, \mO_{X_t})=0$. We explain that $\Pic(X_U/U) =N^1(X_U/U)$. Let $\eta\in U$ be the generic point. Then, by the flat base change, we have $H^1(X_\eta, \mO_{X_\eta})=0$. In particular, we have $\Pic(X_\eta)=N^1(X_\eta)$. Thus, if $D$ is a Cartier divisor such that $D \equiv 0/U$, then we have $D_\eta \sim_\bQ 0$. Replacing $D$ by a multiple, we can assume that $D_\eta \sim 0$, and thus there exists $\alpha\in K(X_\eta)=K(X_U)$ such that $D_\eta = {\rm div}(\alpha)$ on $X_\eta$. This implies that $D-{\rm div}(\alpha)$ is a vertical divisor on $X_U/U$. As $D-{\rm div}(\alpha) \equiv 0/U$ and $U$ is smooth, there exists a divisor $L$ on $U$ such that $D-{\rm div}(\alpha)=f^*L$ by the negativity lemma. That is, $D \sim_\bQ 0/U$. This shows $\Pic(X_U/U) =N^1(X_U/U)$. 

Let 
\begin{equation}\label{eq: flat}
\mB_i,\quad 1 \leq i \leq l
\end{equation} be Cartier divisors on $X$ such that $\{[\mB_i] \mid 1 \leq i \leq l\}$ spans the vector space $N^1(X/T)$. Shrinking $T$, we can assume that each irreducible component of $\mB_i$ is flat over $T$. By \eqref{eq: surj of natural map}, we can further assume that for very general $t\in T$, any divisor $B_t$ of $N^1(X_t)$ is a restriction of a divisor $\mB$ of $N^1(X/T)$, and each irreducible component of $\mB$ is flat over $T$.

By \eqref{eq: inclusion of Pic to functor}, we have
\[
\Pic(X_U/U)_{\bZ} \subset {\mP}ic_{X_T/T}(U)
\] for any open subset $U\subset T$.
Taking direct image sheaves of the short exact sequence, 
\[
0 \to \bZ \to \mO_{X_T} \to \mO^*_{X_T} \to 0,
\] we have the long exact sequence
\[
\cdots \to R^1f_* \mO_{X_T} \to R^1f_*\mO^*_{X_T} \to R^2f_*\bZ \to R^2f_* \mO_{X_T} \to \cdots.
\] By \eqref{eq: R1,2}, we have $R^1f_* \mO_{X_T}= R^2f_* \mO_{X_T}=0$. Therefore, we have $R^1f_*\mO^*_{X_T} \simeq R^2f_*\bZ$, and thus
\begin{equation}\label{eq: Pic =H^0}
{\mP}ic_{X_T/T}(U) = H^0(U, R^1f_*\mO^*_{X_T}) = H^0(U, R^2f_*\bZ)
\end{equation} for any open subset $U\subset T$ by equation \eqref{eq: R1}.

By the Thom-Whitney stratification and Thom’s first isotopy lemma, there exists a constructible stratification in the Zariski topology such that each stratum is locally trivial in the analytic topology (see \cite[Theorem 3.2]{ZS10}). In particular, $R^2f_*\bZ$ restricting to each stratum is a local system in the analytic topology (see \cite[Proposition 3.5]{ZS10}). That is, $R^2f_*\bZ$ restricting to each stratum is a locally constant sheaf in the analytic topology. Shrinking $T$, we can assume that 
$R^2f_*\bZ$ is a local system over $T$.

The key to the argument below is to show that, in the above setting, $R^2f_*\bR = (R^2f_*\bZ) \otimes \bR$ is indeed a constant sheaf on $T$ in the Zariski topology. The argument in this part is similar to the argument of \cite[Lemma 6.6]{dFH11}.

By \eqref{eq: Pic=N}, \eqref{eq: inclusion of Pic to functor} and \eqref{eq: Pic =H^0}, we obtain the relations
\begin{equation}\label{eq: inclusions}
N^1(X_U/U)=\Pic(X_U/U) \subset  {\mP}ic_{X_T/T}(U) \otimes \bR= H^0(U, R^2f_*\bR) \subset H^0(\bD, R^2f_*\bR)
\end{equation} for any open subset $U\subset T$ and any analytic open subset $\bD \subset U$. We will show that these inclusions are indeed equalities.
By assumption (see \eqref{eq: surj of natural map}), the restriction map 
\[
\Pic(X_U/U)\to \Pic(X_t)
\] is surjective for very general $t\in U$. Let $t_0\in T$ be an arbitrary point. As $R^2f_*\bZ$ is a local system, there exists a contractible analytic open subset $t_0\in \bD\subset T$ such that
\begin{equation}\label{eq: identify}
H^0(\bD, R^2f_*\bZ) \simeq H^2(X_{t_0}, \bZ).
\end{equation} By \eqref{eq: R1,2} and \eqref{eq: Pic=N}, we have
\begin{equation}\label{eq: eq on fibers}
\Pic(X_t)_{\bZ} = H^1(X_t, \mO_{X_t}^*)=H^2(X_{t}, \bZ).
\end{equation} Therefore, for any Cartier divisor $B_{t_0}\in \Pic(X_{t_0})$, there exists a Cartier divisor $B_t \in \Pic(X_t)$ for any $t\in \bD$ through the natural identification \eqref{eq: identify}. By the density of very general points in analytic topology, we can take $t$ to be a very general point such that $\Pic(X_U/U)|_{X_t}\simeq \Pic(X_t)$. Therefore, by the choices made in \eqref{eq: flat}, there exists a $\bQ$-Cartier divisor $\mB$ on $X_U$, which is a flat cycle over $U$, such that $\mB_{t}\coloneqq \mB|_{X_t}=B_t$. Moreover, as $\mB$ is a flat cycle over $U$,  $\mB_{t_0}$ can be identified with $\mB_{t}=B_t$ as cycles through the isomorphism
\begin{equation}\label{eq: identify fibers}
H^2(X_{t_0}, \bZ)\simeq H^0(\bD, R^2f_*\bZ) \simeq H^2(X_{t}, \bZ).
\end{equation} Hence, we have $\mB_{t_0}= B_{t_0}$ as cycles by the choice of $B_t$. We conclude that $\mB_{t_0}=B_{t_0}$ as divisors by the equality $\Pic(X_{t_0})_{\mathbb Z}=H^2(X_{t_0},\mathbb Z)$ in \eqref{eq: eq on fibers}. As $\mB \in \Pic(X_U/U)$, from \eqref{eq: identify}, we see that \eqref{eq: inclusions} are indeed equalities. Moreover, as $t_0$ is an arbitrary point, the above discussion also shows that the restriction map
\[
N^1(X_U/U) \to N^1(X_t)
\] is surjective for any $t\in U$. By the above construction, the variety $T$ obtained by shrinking the original base satisfies the conclusion of Theorem \ref{thm: deform pic} (1). We denote it by $T_0$.

Next, we deduce Theorem \ref{thm: deform pic} (2) from Theorem \ref{thm: deform pic} (1). We proceed with the argument in two steps.

Step 1. We prove the claim under the additional assumption that $X$ is smooth. We claim that there exists a generically finite base change $T' \to T$ and an open subset $T_0 \subset T'$ such that 
\[
N^1(X_{T_0}/T_0) \to N^1(X_{T_0,t})
\] is surjective for very general $t\in T_0$. This can be proved by the same argument as \cite[Proposition 12.2.5]{KM92}. Indeed, let $\mH$ be the Hilbert scheme that parametrizes $(n-1)$-cycles of fibers of $f$, where $\dim (X/T)=n$. Let $\mH = \cup_{i\in J} \mH_i$ be the decomposition into irreducible components. Then $J$ is at most a countable set and the natural map $p_i: \mH_i \to T$ is projective. Let $I = \{i \in J \mid p_i(\mH_i) \neq T\}$. Then 
\[
W = T -\bigcup_{i\in I}p_i(\mH_i)
\] consists of very general points. 

Shrinking $T$, we can assume that $X \to T$ is a locally trivial family in the analytic topology. As $X$ is smooth, shrinking $T$ further, we can assume that $f$ is a smooth morphism. Hence, $h^{1,1}(X_{t}, \bQ)$ is invariant for $t\in T$. 
Fix some $t_0\in W$. By the choice of $W$, for each prime divisor $D$ on $X_{t_0}$, there exists some $i\in I$ and a Weil divisor $\mD$ on $X_{\mH_i}\coloneqq \mH_i \times_T X$ such that $\mD|_{X_{\mH_i,s_0}} = D$, where $s_0 \in \mH_i$ is a closed point such that $p_i(s_0)=t_0$. Note that we have $X_{\mH_i,s_0} \simeq X_{t_0}$. Let $\mG_i \subset \mH_i$ be a closed subvariety such that $\mG_i \to T$ is a generically finite morphism. Then we still have $\mD|_{X_{\mG_i,s_0}} = D$, where $s_0 \in \mG_i$ is a closed point such that $p_i(s_0)=t_0$. As $f$ is smooth, $f_{\mG_i}$ is still a smooth morphism. Shrinking $\mG_i$, we can assume that $\mG_i$ is a smooth variety, and thus $X_{\mG_i}$ is a smooth variety. Hence $\mD|_{X_{\mG_i}}$ is Cartier divisor on $X_{\mG_i}$. 

As $h^{1,1}(X_{t_0}, \bQ)$ is of finite dimension, we can repeat the above process finitely many times, and obtain a generically finite morphism $q: T' \to T$ and an open subset $T_0 \subset T'$ such that 
\[
N^1(X_{T_0}/T_0) \to N^1(X_{T_0,t_0})
\] is surjective. Moreover, each Cartier divisor $B$ on $X_{T_0,t_0}$ is a restriction of a cycle $\mB$ on $X_{T_0}$ which is flat over $T_0$. Since $X \to T$ is a locally trivial family in the analytic topology, there exists a contractible analytic open set $\bD \subset T$ containing $t_0$ such that $X_{\bD}$ is trivial over $\bD$. By the flatness of $\mB$ over $T_0$, for each $t\in \bD$, $[\mB_t]$ corresponds to $[\mB_{t_0}]$ under the identification $H^2(X_{t_0}, \bQ) \simeq H^2(X_{t}, \bQ)$. By the connectedness of $T_0$, we see that
\[
N^1(X_{T_0}/T_0) \to N^1(X_{T_0,t})
\] is surjective for each $t\in q^{-1}(W)\cap T_0$. This verifies the conditions in Theorem \ref{thm: deform pic} (1). Note that for the dense subset $S' = q^{-1}(S)$, each point $s \in S'$ satisfies that the fiber $X_{T_0, s}$ has rational singularities and $H^i(X_{T_0, s}, \mO_{X_{T_0, s}}) = 0, i = 1, 2$. The above shows that Theorem \ref{thm: deform pic} (2) holds when $X$ is smooth.

Step 2. We show the claim in the general case. 

Let $h: Y \to X$ be a resolution which can be assumed to be a fiber-wise resolution after shrinking $T$. Note that for the dense subset $S$, each point $s \in S$ satisfies that the fiber $Y_s$ has rational singularities and $H^i(Y_s, \mO_{Y_s}) = 0, i = 1, 2$. By Step 1, there exist a generically finite base change $q: T' \to T$ and an open subset $T_0 \subset T'$, such that $N^1(Y_{T_0}/T_0) \to N^1(Y_{T_0,t})$ is surjective for $t\in q^{-1}(W)\cap T_0$, where $W\subset T$ consists of very general points. Moreover, each Cartier divisor $B$ on $Y_{T_0,t_0}$ is a restriction of a cycle $\mB$ on $Y_{T_0}$ which is flat over $T_0$. 

The above morphisms are summarized in the following diagram:
\[
	\begin{tikzcd}
	 Y_{T'}\arrow[d,"h_{T'}"] \arrow[r] &Y \arrow[d, "h"] \\
	 X_{T'} \arrow[d,"f_{T'}"] \arrow[r]  &X \arrow[d, "f"]\\
     T' \arrow[r,"q"] & T.
	\end{tikzcd}
\] 

As before, after shrinking $T$, we can assume that $f$ is locally trivial in the analytic topology. Moreover, by Lemma \ref{lem: trivial morph}, the morphism $h$ can also be locally trivialized over $T$ in the analytic topology. In particular, this implies that for a fixed $t_0\in T$, there exists an analytic open subset  $t_0\in \mathbb D \subset T$, such that for any $t\in \mathbb D$, the following diagram commutes
\[
 	\begin{tikzcd}
	 H^2(Y_{t_0}, \bZ)\arrow[r, "\simeq"] &H^2(Y_{t}, \bZ) \\
	 H^2(X_{t_0}, \bZ)\arrow[u, "h_{t_0}^*"]\arrow[r, "\simeq"] &H^2(X_{t}, \bZ) \arrow[u, "h_{t}^*", swap],
	\end{tikzcd}   
\]
where $H^2(Y_{t_0}, \bZ) \simeq H^2(Y_{t}, \bZ)$ and $H^2(X_{t_0}, \bZ)\simeq H^2(X_{t}, \bZ)$ are obtained through natural identifications as \eqref{eq: identify fibers}. 

Take any $t_0 \in W \cap q(T_0)$. Note that by \eqref{eq: R1,2} and \eqref{eq: Pic=N}, $\dim N^1(X_{t})$ is invariant for each $t\in T$. Let 
\[
h_{t_0}^*(N^1(X_{t_0}))\coloneqq \{[h_{t_0}^*B] \mid [B] \in N^1(X_{t_0})\}
\] be the subspace of $N^1(Y_{t_0})$. Let $s_0\in q^{-1}(W)\cap T_0$ be a closed point such that $q(s_0)=t_0$. By the discussion in the last paragraph of Step 1, there are finitely many flat cycles $\mB_i, 1 \leq i \leq l$ on $Y_{T_0}$ over $T_0$ such that $\{[\mB_{i,s_0}]\in N^1(Y_{T_0, s_0})\}$ generates the subspace $h^*(N^1(X_{t_0}))$. Because $Y_{T_0}$ is locally trivial over $T_0$ in the analytic topology, we see that $\{[\mB_{i,s}] \mid 1 \leq i \leq l\}$ spans a subspace of the same dimension as $\dim N^1(X_{t_0})$. 

In the analytic topology, as $f$ is locally trivial and $h$ is locally trivial over $T$, we have that $f\circ h: Y \to T$ is also locally trivial. The corresponding property is preserved after the base change $T' \to T$. In particular, there exists an analytic open subset $s_0\in \bD \subset T_0$ such that for any $s\in \mathbb D$, the following diagram commutes
\begin{equation}\label{eq: trivial over T'}
	\begin{tikzcd}
	 H^2(Y_{T_0,s_0}, \bZ)\arrow[r, "\simeq"] &H^2(Y_{T_0,s}, \bZ) \\
	 H^2(X_{T_0,s_0}, \bZ)\arrow[u, "h_{T_0, t_0}^*"]\arrow[r, "\simeq"] &H^2(X_{T_0,s}, \bZ) \arrow[u, "h_{T_0, s}^*", swap],
	\end{tikzcd}
\end{equation} where $H^2(Y_{T_0, s_0}, \bZ) \simeq H^2(Y_{T_0, s}, \bZ)$ and $H^2(X_{T_0, s_0}, \bZ)\simeq H^2(X_{T_0, s}, \bZ)$ are obtained through natural identifications as \eqref{eq: identify fibers}. As $\mB_{i,s_0} =h_{T_0,s_0}^*B$ for some $\bQ$-Cartier divisor $B$ on $X_{T_0,s_0}$ by construction, we have $\mB_{i,s_0} \equiv 0/X_{T_0,s_0}$. Thus, we have $\mB_{i,s} \equiv 0/X_{T_0,s}$ for $s\in \bD$ by \eqref{eq: trivial over T'}. By the connectedness of $T_0$, we have
\[
\mB_i \equiv 0/X_{T_0}.
\] 
As general fibers of $f_{T_0}$ have rational singularities, after shrinking $T_0$, we can assume that $X_{T_0}$ still has rational singularities (see \cite[Théorème 2]{Elk78}). By Lemma \ref{lem: trivial div}, we have $\mB_i \sim_\bQ 0/X_{T_0}$. As $h_{T_0}$ is a birational morphism, $\mD_i\coloneqq (h_{T_0})_*\mB_i$ is a $\bQ$-Cartier divisor and $\mB_i=h_{T_0}^*\mD_i$ by the negativity lemma. Therefore, we have 
\[
h^*_{T_0,s_0}(\mD_{i,s_0})\sim_\bQ h^*_{T_0,s_0}(B),
\] and hence $\mD_{i,s_0}\sim_\bQ B$. This shows that
\[
N^1(X_{T_0}/T_0) \to N^1(X_{T_0,s_0})
\] is surjective. 

Thus, $\{[\mD_{i,s}] \mid 1 \leq i \leq l\}$ spans a subspace of the same dimension as $\dim N^1(X_{t_0})$. Hence, the natural map 
\[
N^1(X_{T_0}/T_0) \to N^1(X_{T_0,s})
\] is surjective for each $s\in q^{-1}(W)\cap T_0$. This verifies that the conditions of Theorem \ref{thm: deform pic} (1) are satisfied, and thus the desired claim follows.
\end{proof}

\begin{corollary}\label{cor: FT and rationally connected}
Let $f: X \to T$ be a projective fibration. Suppose that $S\subset T$ is a Zariski dense subset such that for any $s\in S$, the fiber $X_s$ is a rationally chain-connected variety with rational singularities. Then, up to a generically finite base change of $T$, there  exists a non-empty open subset $T_0\subset T$, such that ${\mP}ic_{X_{T_0}/T_0} \otimes \bR$ is a constant sheaf. Moreover, for any open subset $U\subset T_0$, the natural restriction maps
\[
N^1(X_{T_0}/T_0) \to N^1(X_U/U) \to N^1(X_t)
\] are isomorphisms for any $t\in U$. In particular, if for any $s\in S$, the fiber $X_s$ is a variety of Fano type, then the above claim holds. 
\end{corollary}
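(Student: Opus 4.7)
The plan is to deduce this corollary from Theorem \ref{thm: deform pic} by verifying its two fiberwise hypotheses: for each $s\in S$, that $X_s$ has rational singularities and that $H^1(X_s,\mO_{X_s}) = H^2(X_s,\mO_{X_s}) = 0$. The hypotheses on $(X,\Delta)$ or on $\bQ$-factoriality of $X$ transfer verbatim, so once these fiberwise conditions are established, the conclusion on ${\mP}ic_{X_{T_0}/T_0}\otimes \bR$ and on the restriction maps of N\'eron-Severi spaces follows immediately.

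To establish rational singularities, since $X_s$ has klt type singularities there exists $\Delta_s \geq 0$ with $(X_s,\Delta_s)$ klt, and Elkik's theorem (see \cite[Theorem 5.22]{KM98}) then gives that $X_s$ has rational singularities. For the cohomology vanishing, I would pick a log resolution $\pi\colon \tilde X_s \to X_s$; rationality of singularities yields $R^i\pi_*\mO_{\tilde X_s}=0$ for $i>0$, so the Leray spectral sequence collapses to
\[
H^i(X_s,\mO_{X_s}) \cong H^i(\tilde X_s,\mO_{\tilde X_s}).
\]
Next, by the theorem of Hacon--McKernan \cite{HM07}, a projective klt pair whose underlying variety is rationally chain-connected is in fact rationally connected; applying this to $(X_s,\Delta_s)$, the smooth projective resolution $\tilde X_s$ is rationally connected. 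Since a smooth projective rationally connected variety admits no nonzero global holomorphic form, Hodge symmetry yields
\[
H^i(\tilde X_s,\mO_{\tilde X_s}) = H^0(\tilde X_s,\Omega^i_{\tilde X_s}) = 0
\]
for $i>0$, which is exactly the required vanishing.

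For the last assertion, if $X_s$ is of Fano type then by the definition recalled in Section \ref{sec: pre} there is a $\bQ$-divisor $\Delta_s$ making $(X_s,\Delta_s)$ a klt log Fano pair, so $X_s$ has klt type singularities; and by Zhang's theorem (or again by Hacon--McKernan) such a klt log Fano variety is rationally connected, and in particular rationally chain-connected. Hence the Fano type assumption is a special case of the main hypothesis, and the corollary applies. The only conceptually nontrivial step is the appeal to Hacon--McKernan to upgrade rational chain-connectedness of the singular fiber to rational connectedness of a resolution — this is the bridge that makes the Hodge-theoretic vanishing on $\tilde X_s$ available; everything else is a direct citation of Elkik's theorem, Leray, and standard Hodge theory.
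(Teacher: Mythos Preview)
Your proof is correct and follows the same approach as the paper: reduce to Theorem~\ref{thm: deform pic} by verifying its fiberwise hypotheses. The paper's proof simply asserts that $H^1(X_s,\mO_{X_s})=H^2(X_s,\mO_{X_s})=0$ for a rationally chain-connected variety with klt type singularities and then cites Theorem~\ref{thm: deform pic}, whereas you spell out the standard justification via Elkik, Hacon--McKernan, Leray, and Hodge symmetry; this is a welcome elaboration of the same argument, not a different route.
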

\begin{proof}
Shrinking $T$, we have $H^1(X_s, \mO_{X_s})=H^2(X_s, \mO_{X_s})=0$ for each $s\in S$ as $X_s$ is a rationally chain-connected variety with rational singularities (see \cite[IV, Theorem 3.10, Corollary 3.8]{Kol96}). Hence, the claim follows from Theorem \ref{thm: deform pic}. Besides, a Fano type variety is naturally a rationally chain-connected variety with rational singularities.
\end{proof}

\subsection{Deformation of nef cones}\label{subsec: nef cone}

\begin{theorem}\label{thm: nef cone}
Let $f: X \to T$ be a projective fibration. Assume that $\Nef(X_t)$ is a rational polyhedral cone for each $t\in T$. Suppose that for any open subset $V\subset T$, 
\begin{enumerate}
\item the natural restriction map $N^1(X_V/V) \to N^1(X_t), t\in V$ is an isomorphism, and
\item for any $\mD \in \bEff(X_V/V)$, there exists a $\mD$-MMP$/V$ which terminates to a model such that the push-forward of $\mD$ is nef.
\end{enumerate}
Then there exists a non-empty Zariski open subset $T_0\subset T$, such that for any open subset $U\subset T_0$, we have
\[
\Nef(X_{T_0}/T_0) \simeq \Nef(X_U/U) \simeq \Nef(X_t), t\in U
\] under the natural restriction maps. Moreover, there are isomorphisms of Mori cones
\[
\NE(X_t) \simeq \NE(X_U/U) \simeq \NE(X_{T_0}/T_0), t\in U
\] under the natural inclusion maps.

\end{theorem}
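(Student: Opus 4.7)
The plan is to combine the $N^1$-isomorphism of (1) with the relative MMP of (2) to extend each rational extremal generator of $\Nef(X_{t_0})$ at a fixed closed point $t_0\in T$ to a class on $X/T$ that becomes nef after discarding a proper closed ``bad locus'' of the MMP. The easy halves of both chains are automatic: if $\mD$ is nef over $V$ then $\mD|_{X_t}$ is nef on every fiber, giving the inclusions $\Nef(X_{T_0}/T_0) \hookrightarrow \Nef(X_U/U) \hookrightarrow \Nef(X_t)$ under the identifications from (1); dually one obtains $\overline{\NE}(X_t) \hookrightarrow \overline{\NE}(X_U/U) \hookrightarrow \overline{\NE}(X_{T_0}/T_0)$. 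The substance of the theorem is producing the reverse inclusions.

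Using rational polyhedrality, I pick $\bQ$-Cartier classes $D_{t_0}^1,\dots,D_{t_0}^k$ generating the extremal rays of $\Nef(X_{t_0})$, and via (1) lift each to a $\bQ$-Cartier class $\mD^i$ on $X/T$ with $\mD^i|_{X_{t_0}}\sim_\bQ D_{t_0}^i$. I would next verify that each $\mD^i$ is pseudo-effective over $T$ after a preliminary shrinking of $T$: in our setting this follows from the deformation invariance of effective cones (Corollary \ref{cor: def of eff cone}) combined with $\Nef(X_{t_0})\subset \bEff(X_{t_0})$; alternatively one perturbs to $\mD^i+\varepsilon H$ for $H$ ample over $T$ and invokes Lemma \ref{lem: fiber big implies global big}. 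By hypothesis (2) a $\mD^i$-MMP$/T$ then terminates at a birational contraction $\phi_i\colon X \dashrightarrow X'_i/T$ with $\phi_{i*}\mD^i$ nef over $T$. Each elementary step contracts curves of $\mD^i$-negative class, and such curves cannot lie in $X_{t_0}$ since $\mD^i|_{X_{t_0}}$ is nef; if the exceptional divisor of a divisorial step or the flipping locus of a flip dominated $T$, a moving curve in it could be specialized to $X_{t_0}$ to produce a $\mD^i|_{X_{t_0}}$-negative curve, contradicting nefness. Hence each step projects to a proper closed subset of $T$, and finite termination yields a proper closed $Z_i\subsetneq T$ off of which $\phi_i$ is a fiberwise isomorphism, so $\mD^i=\phi_{i*}\mD^i$ is nef over $T\setminus Z_i$. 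Setting $T_0:=T\setminus \bigcup_i Z_i$ makes every $\mD^i$ nef over $T_0$, giving $\Nef(X_{t_0})\subset \Nef(X_{T_0}/T_0)$, and combined with the easy inclusion, $\Nef(X_{t_0})=\Nef(X_{T_0}/T_0)$.

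To finish I need $\Nef(X_t)=\Nef(X_{T_0}/T_0)$ for every $t\in T_0$, not only at $t_0$, as well as the middle isomorphism for all $U\subset T_0$. The key elementary observation is that a class is nef over an open $W\subset T_0$ if and only if it is nef on every fiber $X_w$, $w\in W$; this gives the identity $\Nef(X_W/W)=\bigcap_{w\in W}\Nef(X_w)$ and handles the middle piece automatically. The remaining and, I expect, main obstacle is to arrange that $t_0$ is \emph{generic} in the sense that $\Nef(X_{t_0})$ is maximal among all $\Nef(X_t)$ (semicontinuity: specializations can only acquire extra extremal curves, shrinking $\Nef$), so that the reverse inclusion $\Nef(X_t)\subset \Nef(X_{t_0})$ holds for every $t\in T_0$; maximality of the generic fiber's nef cone is controlled in our Fano-type regime by finiteness of the Mori chamber decomposition, and more abstractly one deduces it from rational polyhedrality of fiber cones together with (1)--(2), possibly after one further Noetherian shrinking. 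Finally, the Mori cone chain $\overline{\NE}(X_t)=\overline{\NE}(X_U/U)=\overline{\NE}(X_{T_0}/T_0)$ follows by dualizing the nef cone equalities under the natural inclusions $N_1(X_t)\hookrightarrow N_1(X_U/U)\hookrightarrow N_1(X_{T_0}/T_0)$, with $\NE=\overline{\NE}$ everywhere by rational polyhedrality.
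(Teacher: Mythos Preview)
Your opening argument—lifting the extremal generators of $\Nef(X_{t_0})$ to classes $\mD^i$ on $X/T$, running a $\mD^i$-MMP$/T$, and observing that each step has exceptional locus mapping to a proper closed subset of $T$ since no $\mD^i$-negative curve can lie in $X_{t_0}$—is exactly the mechanism the paper uses to establish that the nef locus $\mN_\mD=\{t:\mD_t\text{ nef}\}$ is open, and hence that $t\mapsto\Nef(X_t)$ is lower semi-continuous. One minor point: invoking Corollary~\ref{cor: def of eff cone} to check $\mD^i\in\bEff(X/T)$ is out of order here, since that corollary is specific to the Fano type situation while the present theorem is stated more generally; your alternative via perturbation by $\varepsilon H$ and Lemma~\ref{lem: fiber big implies global big} (after passing to a resolution) is the correct route.

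The genuine gap is precisely what you flag as the ``main obstacle'': having fixed $t_0$ at the outset, your argument only yields $\Nef(X_{t_0})=\Nef(X_{T_0}/T_0)$ on some open $T_0\ni t_0$, and to get $\Nef(X_t)=\Nef(X_{t_0})$ for \emph{every} $t\in T_0$ you need $\Nef(X_{t_0})$ to be maximal among all fiber nef cones. You offer no argument for this beyond a gesture at ``Noetherian shrinking'' or Mori-chamber finiteness, neither of which is available in the stated generality. The paper fills this with an induction on $\dim T$: one chooses countably many hypersurfaces $Z_i\subset T$ with dense union, applies the inductive hypothesis to each $X_{Z_i}/Z_i$ to get constant cones $P_i$ over opens of the $Z_i$, sets $P_\infty=\overline{\bigcup_i P_i}$, and uses lower semi-continuity twice to show $P_\infty=\Nef(X_t)$ first for very general $t$ and then on an honest Zariski open. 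A shorter fix in the spirit of your sketch is to choose $t_0$ very general from the start: enumerate the countably many rational classes $\mD\in N^1(X/T)_\bQ$; by your MMP argument each $\mN_\mD$ is either empty or open, so a very general $t_0$ lies in every nonempty $\mN_\mD$; since each $\Nef(X_t)$ is rational polyhedral with rational extremal generators, this forces $\Nef(X_t)\subset\Nef(X_{t_0})$ for all $t$, giving the required maximality.
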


\begin{proof}
We proceed with the argument in several steps.

Step 1. If we have $\mD \in N^1(X/T)$ such that $\mD_t \coloneqq \mD|_{X_t}$ is nef, then $\mD_t$ is nef for very general $t\in T$ by the openness of ampleness. Set 
\[
\mN_\mD \coloneqq \{t\in T \mid \mD_t \text{~ is nef~}\}.
\] We claim that $\mN_\mD$ is an open subset. By assumption, we can run some $\mD$-MMP/$T$, $X\dasharrow X'$, such that $\mD'$ is nef$/T$, where $\mD'$ is the strict transform of $\mD$ on $X'$. This MMP is invariant when restricting to the fiber $X_t$ for any $t\in \mN_\mD$. As the image $Z \subset T$ of the contracting/flipping loci is a closed subset of $T$, and since $Z = T \setminus \mN_{\mD}$, it follows that $\mN_{\mD}$ is an open subset of $T$.

Note that $\Nef(X_{t})$ is a rational polyhedral cone by assumption. Suppose that $\Nef(X_{t})$ is generated by finitely many rays $\mD_i$ for $i=1, \ldots, l$. Then, the fact that $\mN_{\mD_i}$ is open for each $i$ implies that the map $t\mapsto \Nef(X_t)$ is lower semi-continuous in the following sense: for any $t_0\in T$, there exists a Zariski open set $U_{t_0}$ containing $t_0$ such that $\Nef(X_{t})$ contains $\Nef(X_{t_0})$ for any $t\in U_{t_0}$. Here, we view $\Nef(X_t)$ as a subset of $N^1(X/T)\cong N^1(X_t)$ by assumption, and the cones are compared under this natural identification. By abuse of notation, the inclusion ``$\subset$" is understood under this identification.

Step 2. We show that there exists an open subset $T_0\subset T$ such that $\Nef(X_t)$ is invariant for all $t \in T_0$.  We argue by induction on the dimension of the base $T$. The case when $\dim T=0$ is obvious. Assume that the claim holds when $\dim T =n-1$. Then we show that it also holds when $\dim T=n$.

First, we choose countably many hypersurfaces $\{Z_i\}$ in $T$ such that $\cup Z_i$ is Zariski dense in $T$. For each $Z_i$, by induction, there exists an open subset $U_i\subset T$ such that $U_i\cap Z_i$ is non-empty and $\Nef(X_t)$ is invariant on $U_i\cap Z_i$. We set $P_i\coloneqq\Nef(X_{t_i})$ for some $t_i\in U_i\cap Z_i$. By the lower semi-continuity, after shrinking $U_i$ around $t_i$, we can assume that $P_i \subset \Nef(X_t)$ for any $t\in U_i$. By abuse of notation, we also write $P_i$ for the cone inside $N^1(X/T)$ under the natural identification $N^1(X/T) \simeq N^1(X_{t_i})$. Let 
\[
P_\infty\coloneqq\overline{\cup_i P_i}
\] be the closure of $\cup_i P_i$ inside $N^1(X/T)$. Set $U_\infty\coloneqq\cap_i U_i$. We claim that $P_\infty=\Nef(X_t)$ for any $t\in U_\infty$. First, we have $P_\infty\subset \Nef(X_t)$ for any $t\in U_\infty$ as $P_i \subset \Nef(X_t)$ for $t\in U_i$. Suppose that $P_\infty\subsetneqq \Nef(X_{t_0})$ for some $t_0\in U_\infty$, then, by the lower semi-continuity,  there exists an open subset $t_0\in V\subset T$ such that $P_\infty\subsetneqq \Nef(X_t)$ for any $t\in V$. Since $\cup_i Z_i$ is dense in $T$, $V$ must intersect some $Z_i\cap U_i$. Thus, we have
\[
P_\infty\subsetneqq \Nef(X_t)
\] for some $t\in U_i\cap V$. However, we have $\Nef(X_t)= P_i$ for any $t\in U_i \cap Z_i$ by construction. This is a contradiction. Therefore, we have $P_\infty=\Nef(X_t)$ for any $t\in U_\infty$.

For any $t'\in U_\infty$, there exists an open subset $t'\in V_0\subset T$ such that $P_\infty=\Nef(X_{t'}) \subset \Nef(X_t)$ for each $t\in V_0$. If there exists some $\tilde t$ such that the inclusion is strict, then, by the lower semi-continuity, there exists an open subset $\tilde t \in \tilde V_0$ such that $P_\infty \subsetneqq \Nef(X_t)$ for any $t\in \tilde V_0$. However, as $U_\infty$ consists of very general points, we have $U_\infty \cap \tilde V_0 \neq \emptyset$. This is a contradiction as $\Nef(X_{t''})=P_\infty$ for any $t''\in U_\infty$ as shown above. This shows that $P_\infty= \Nef(X_t)$ for each $t\in V_0$. Therefore, the natural restriction map
\[
\Nef(X_{V_0}/V_0) \to \Nef(X_t), t\in V_0
\] is an isomorphism. In fact, if $\mD\in N^1(X_{V_0}/V_0)$ is a divisor such that $\mD_t\in \Nef(X_t)$. Then $\mD_{t'} \in N^1(X_{t'})$ is the divisor identified with $\mD_t$ for each $t'\in V_0$. As $ \Nef(X_t)=P_\infty= \Nef(X_{t'})$ for $t, t'\in V_0$, we see that $\mD_{t'} \in \Nef(X_{t'})$. In particular, we have $\mD \in \Nef(X_{V_0}/V_0)$.

Denote $T_0:=V_0$. For any open subset $U\subset T_0$, we have the natural inclusions
\[
\Nef(X_{T_0}/T_0)|_{X_t} \hookrightarrow \Nef(X_U/U)|_{X_t} \hookrightarrow \Nef(X_t)
\] for any $t\in U$. As $\Nef(X_{T_0}/T_0)|_{X_t}= \Nef(X_t)$, we have $\Nef(X_U/U)|_{X_t} = \Nef(X_t)$. As the natural restriction map $N^1(X_U/U) \to N^1(X_t)$ is an isomorphism, we have $\Nef(X_U/U) \simeq \Nef(X_t)$ for any $t\in U$.

Step 3. For the last statement on the isomorphism of Mori cones, the natural maps
\[
\NE(X_t) \to \NE(X_U/U) \to \NE(X_{T_0}/T_0), t\in U
\] are injective as $N^1(X_{T_0}/T_0) \simeq N^1(X_U/U) \simeq N^1(X_t)$. Thus, we have $\NE(X_t) \subset \NE(X_U/U)$ under this identification. If the inclusion is strict, then there exists a divisor $\mD$ on $X_U/U$ such that $\mD$ is not nef$/U$ while $\mD_t \in \Nef(X_t)$. By $\Nef(X_U/U) \simeq \Nef(X_t)$, we see that $\mD$ is nef$/U$. This is a contradiction. The remaining isomorphisms can be shown similarly.
\end{proof}

As a corollary of Theorem \ref{thm: nef cone}, we have the following two corollaries.

\begin{corollary}\label{cor: nef cone for MDS}
Let $f: X \to T$ be a projective fibration. Assume that $X_t$ is a Mori dream space for each $t\in T$. Suppose that for any open subset $V\subset T$, 
\begin{enumerate}
\item the natural restriction map $N^1(X_V/V) \to N^1(X_t)$ for any $t\in V$ is an isomorphism, and
\item $X_V$ is a Mori dream space over $V$.
\end{enumerate}
Then there exists a non-empty Zariski open subset $T_0\subset T$, such that for any open subset $U\subset T_0$, we have
\[
\Nef(X_{T_0}/T_0) \simeq \Nef(X_U/U) \simeq \Nef(X_t), t\in U
\] under the natural restriction maps. Moreover, there are isomorphisms of Mori cones
\[
\NE(X_t) \simeq \NE(X_U/U) \simeq \NE(X_{T_0}/T_0), t\in U
\] under the natural inclusion maps.
\end{corollary}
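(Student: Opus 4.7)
The plan is to reduce the statement directly to Theorem \ref{thm: nef cone} by verifying that each of its three hypotheses holds under the Mori dream space assumptions. Condition (1) of the corollary gives the isomorphism $N^1(X_V/V) \to N^1(X_t)$ for any open $V \subset T$ and $t \in V$, which is precisely hypothesis (1) of Theorem \ref{thm: nef cone}. The rational polyhedrality of $\Nef(X_t)$ is immediate from the fact that $X_t$ is a Mori dream space, so the remaining hypothesis of Theorem \ref{thm: nef cone} regarding $\Nef(X_t)$ is also satisfied.

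The main point to check is hypothesis (2) of Theorem \ref{thm: nef cone}: for every open subset $V \subset T$ and every $\mD \in \bEff(X_V/V)$, one can run a $\mD$-MMP$/V$ that terminates at a model on which the strict transform of $\mD$ is nef. This is exactly where condition (2) of the corollary enters. By definition of a Mori dream space over $V$, the pseudo-effective cone $\bEff(X_V/V)$ is a rational polyhedral cone admitting a Mori chamber decomposition, and for every $\mD \in \bEff(X_V/V)$ there exists a finite sequence of $\mD$-flips and $\mD$-divisorial contractions$/V$ terminating with the push-forward of $\mD$ being semi-ample (in particular nef) on the output model. Thus hypothesis (2) of Theorem \ref{thm: nef cone} is satisfied for every open $V \subset T$.

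Having verified all the hypotheses, I would invoke Theorem \ref{thm: nef cone} to obtain a non-empty Zariski open subset $T_0 \subset T$ such that for every open $U \subset T_0$ and every $t \in U$, the natural restriction maps yield isomorphisms
\[
\Nef(X_{T_0}/T_0) \simeq \Nef(X_U/U) \simeq \Nef(X_t),
\]
together with the dual isomorphisms of Mori cones
\[
\NE(X_t) \simeq \NE(X_U/U) \simeq \NE(X_{T_0}/T_0).
\]
This completes the proof. The only potential subtlety is ensuring that the MMP produced by the Mori dream space structure truly terminates with nef push-forward, but this is built into the definition of a Mori dream space (indeed one obtains semi-ampleness), so no additional work is required beyond unpacking definitions and citing Theorem \ref{thm: nef cone}.
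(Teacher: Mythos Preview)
Your proposal is correct and follows essentially the same approach as the paper's proof: verify that the Mori dream space hypotheses immediately supply the rational polyhedrality of $\Nef(X_t)$ and the termination of a $\mD$-MMP$/V$ with semi-ample (hence nef) push-forward, then invoke Theorem \ref{thm: nef cone}.
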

\begin{proof}
As $X_t$ is a Mori dream space, $\Nef(X_t)$ is a rational polyhedral cone. As $X_V$ is a Mori dream space over $V$, for any pseudo-effective divisor $\mD$ on $X_V$ over $V$, there exists a $\mD$-MMP$/V$ that terminates to a model such that the push-forward of $\mD$ is semi-ample. This verifies the conditions in Theorem \ref{thm: nef cone}, and thus the claim follows.
\end{proof}

\begin{corollary}\label{cor: nef cone for FT}
Let $f: X \to T$ be a projective fibration. Suppose that there exists a Zariski dense subset $S\subset T$ such that for any $s\in S$, the fiber $X_s$ satisfies one of the cases in Theorem \ref{thm: FT is constructible}. Then, up to a generically finite base change of $T$, there exists a non-empty Zariski open subset $T_0\subset T$, such that for any open subset $U\subset T_0$, we have
\[
\Nef(X_{T_0}/T_0) \simeq \Nef(X_U/U) \simeq \Nef(X_t), t\in U
\] under the natural restriction maps. Moreover, there are isomorphisms of Mori cones
\[
\NE(X_t) \simeq \NE(X_U/U) \simeq \NE(X_{T_0}/T_0), t\in U
\] under the natural inclusion maps.

\end{corollary}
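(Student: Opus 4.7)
The plan is to reduce directly to Theorem \ref{thm: nef cone} by verifying its three hypotheses after a suitable shrinking of $T$. The main work has already been done in the earlier sections; the present statement is essentially a packaging result.

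First, by Theorem \ref{thm: FT is constructible}, after shrinking $T$ we may assume that $X$ itself is of Fano type over $T$. In particular, for every $t \in T$ the fiber $X_t$ is of Fano type, and consequently $\Nef(X_t)$ is a rational polyhedral cone. This verifies the first hypothesis of Theorem \ref{thm: nef cone}. Moreover, for every Zariski open $V \subset T$, the restriction $X_V \to V$ remains of Fano type, which gives the MMP hypothesis: any $\mD \in \bEff(X_V/V)$ (in fact any $\bR$-Cartier divisor on $X_V$) admits a $\mD$-MMP over $V$ that terminates, and when $\mD$ is pseudo-effective the terminal model has nef push-forward. This verifies hypothesis (2) of Theorem \ref{thm: nef cone}.

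Next, under the assumption that $X/T$ is of Fano type (hence the fibers are rationally connected with klt type singularities), Corollary \ref{cor: FT and rationally connected} applies. Shrinking $T$ further we may assume that for every open subset $V \subset T$ the natural restriction maps
\[
N^1(X/T) \to N^1(X_V/V) \to N^1(X_t)
\]
are isomorphisms for every $t \in V$. This verifies hypothesis (1) of Theorem \ref{thm: nef cone}.

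With all three hypotheses of Theorem \ref{thm: nef cone} in place, that theorem produces a non-empty Zariski open subset $T_0 \subset T$ such that for any open $U \subset T_0$ the natural restriction maps give the isomorphisms
\[
\Nef(X_{T_0}/T_0) \simeq \Nef(X_U/U) \simeq \Nef(X_t), \quad t \in U,
\]
together with the dual isomorphisms of Mori cones
\[
\NE(X_t) \simeq \NE(X_U/U) \simeq \NE(X_{T_0}/T_0), \quad t \in U,
\]
which is exactly the desired conclusion. Since each shrinking in the above steps is along a non-empty Zariski open subset, no step poses any real obstacle; the only subtlety is making sure the successive shrinkings are compatible, which is immediate as they are all applied to the same base $T$.
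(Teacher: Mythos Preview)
Your proposal is correct and follows essentially the same strategy as the paper: shrink $T$ via Theorem \ref{thm: FT is constructible} so that $X/T$ is of Fano type, use Theorem \ref{thm: deform pic} (you cite its consequence Corollary \ref{cor: FT and rationally connected}) to get the $N^1$-isomorphisms, and then invoke Theorem \ref{thm: nef cone}. The only cosmetic difference is that the paper passes through the intermediate Corollary \ref{cor: nef cone for MDS} (the Mori dream space version) rather than citing Theorem \ref{thm: nef cone} directly, and it makes explicit the additional shrinking needed to ensure every fiber $X_t$ is of Fano type---a step you fold into ``In particular''.
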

\begin{proof}
By Theorem \ref{thm: FT is constructible}, we can assume that $X$ is of Fano type over $T$ after shrinking $T$. Shrinking $T$ further, we can assume that $X_t$ is of Fano type for each $t\in T$. Note that a variety of Fano type (resp. over $T$) is a Mori dream space (resp. over $T$). Taking a generically finite base change of $T$ and shrinking $T$ again, we may assume that Corollary \ref{cor: nef cone for MDS} (1) is satisfied by Theorem \ref{thm: deform pic} (2). Therefore, the claim follows from Corollary \ref{cor: nef cone for MDS}.
\end{proof}

\section{MMP in a family and deformations of effective cones and Mori chamber decompositions}\label{sec: mmp}

This section is devoted to the proof of Theorem \ref{thm: fiberwise q-fact}, Theorem \ref{mainthm: mmp}, as well as the invariance of effective cones and Mori chamber decompositions stated in Theorem \ref{mainthm: decomposition}.

\subsection{Stable boundedness}\label{subsec: stable boundedness}
Let $X \to T$ be a projective fibration. Suppose that $X$ is of Fano type over $T$. Set 
\[
\bc(X/T) \coloneqq \{h \mid h: X \dasharrow Y/T \text{~is a birational contraction}/T \text{~up to isomorphisms of~} Y/T\}.
\] Note that the above set includes not only varieties birationally contracted by $X$ but also the birational contraction maps from $X$. However, for the sake of simplicity, we also write $Y/T \in \bc(X/T)$ instead of $[h: X \dasharrow Y/T]\in \bc(X/T)$. For $Y_1/T, Y_2/T \in \bc(X/T)$, $Y_1 \simeq Y_2/T$ should be understood as that there exists an isomorphism $\theta: Y_1 \to Y_2$, such that $h_2 =\theta \circ h_1: X \dasharrow Y_2$, where $h_i: X \dasharrow Y_i, i=1,2$ are given birational contractions. Moreover, for a subvariety $V\subset T$, we write $Y_V/V$ instead of the base change of $h$ to $V$. 

To study an MMP of a Fano type variety $X/T$, we also consider a subset $\bc(X/T)_{\rm MMP}$ of $\bc(X/T)$ which we define below. First, we call a sequence of birational contractions
\[
X=X_0 \dto X_1 \dto \cdots \dto X_{n}/T
\] a partial MMP$/T$ (with respect to a divisor $D$) if each step consists of $D_i$-extremal divisorial or flipping contractions and $D_i$-flips, where $D_i$ is the strict transform of $D$ on $X_i$. To be precise, if $X_i \to X_{i+1}/T$ is a $D_i$-flipping contraction, then $X_{i+2} \to X_{i+1}/T$ is its $D_i$-flip. Besides, in the above partial MMP, $D_n$ is not required to be nef over $T$. On the other hand, if $D_{n}$ becomes nef and big over $T$, and $X_{n} \to X_{n+1}$ is the birational contraction induced by $D_n$ (in this case, the relative Picard number of $\rho(X_n/X_{n+1})$ may be greater than $1$), then
\[
X=X_0 \dto X_1 \dto \cdots \dto X_{n}\to X_{n+1}/T
\] is also called a partial MMP$/T$. If a fibration $X \to Z$ has relative Picard number $1$, then it is called an extremal contraction.

We set
\begin{equation}\label{eq: partial MMP}
\begin{split}
    \bc(X/T)_{\rm MMP}\coloneqq \{h \mid &X \dasharrow Y/T \text{~is a partial~} \text{MMP}/T \\
    &\text{up to isomorphisms of~} Y/T \}.
\end{split}
\end{equation} 
Later, we will show that $\bc(X/T)_{\rm MMP}=\bc(X/T)$ when $X$ is $\bQ$-factorial in Proposition \ref{prop: bc=bcmmp}.

Let $\tilde X \to X$ be a small $\bQ$-factorial modification. Since $\tilde X$ and $X$ are isomorphic in codimension $1$, $\bc(\tilde X/T)$ can be naturally identified with $\bc(X/T)$. By abuse of notation, we will write $\bc(\tilde X/T)=\bc(X/T)$ for this identification.  The following is a special case of \cite[Corollary 1.1.5]{BCHM10}.

\begin{proposition}\label{prop: finiteness}
Let $X$ be of Fano type over $T$. Then $\bc(X/T)$ is a finite set.
\end{proposition}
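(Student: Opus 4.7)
The plan is to reduce to the $\bQ$-factorial case and then invoke the finiteness of minimal/ample models from \cite[Corollary 1.1.5]{BCHM10}.

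First, since $X$ is of Fano type over $T$, I pass to a small $\bQ$-factorial modification $\tilde X \to X$ (its existence follows from \cite{BCHM10}). The variety $\tilde X$ is again of Fano type over $T$, and because the modification is small, composition with it induces a canonical bijection $\bc(\tilde X/T) \leftrightarrow \bc(X/T)$, as already observed in the paragraph preceding the statement. So I may assume $X$ itself is $\bQ$-factorial.

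Next, using the Fano type hypothesis together with $\bQ$-factoriality, I choose a $\bQ$-divisor $B \geq 0$ such that $(X,B)$ is klt, $B$ is big over $T$, and $K_X + B \sim_\bQ 0/T$. For any $Y/T \in \bc(X/T)$ with birational contraction $\phi \colon X \dashrightarrow Y$, I pick an ample $\bQ$-divisor $A_Y$ on $Y/T$. Then $Y$ is the ample model over $T$ of the movable divisor $\phi^{-1}_* A_Y$ on $X$, so the target $Y/T$ is determined up to isomorphism over $T$ by the Mori chamber of $\Mov(X/T)$ containing $[\phi^{-1}_* A_Y]$. Since $X$ is Fano type over $T$, the cone $\Mov(X/T) \subset N^1(X/T)$ is rational polyhedral, and I can choose a single rational polytope $P \subset N^1(X/T)$ whose interior meets every such chamber, together with a small $\epsilon > 0$ for which $(X, B + \epsilon D)$ remains klt with $B + \epsilon D$ big over $T$ for all $D \in P$.

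Applying \cite[Corollary 1.1.5]{BCHM10} to the rational polytope of boundaries $\{B + \epsilon D \mid D \in P\}$ then yields only finitely many distinct ample models of $K_X + B + \epsilon D$ over $T$ as $D$ varies in $P$; by the previous paragraph each $Y \in \bc(X/T)$ is one of these models, so $\bc(X/T)$ is finite. The main obstacle is checking that the chosen polytope $P$ genuinely captures every $Y \in \bc(X/T)$ — this reduces to the finiteness of the Mori chamber decomposition of $\Mov(X/T)$ for Fano type varieties, which is itself a standard consequence of \cite{BCHM10}.
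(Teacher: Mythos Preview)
Your proposal is correct and takes essentially the same approach as the paper: both reduce the statement to \cite[Corollary 1.1.5]{BCHM10}, with the paper treating it as a direct citation (no proof is given beyond the sentence preceding the proposition) while you spell out the reduction via a small $\bQ$-factorialization and a choice of polytope. One minor imprecision worth flagging is that the polytope $P$ should be chosen so that the cone $\bR_{\geq 0}\cdot P$ contains all of $\Mov(X/T)$ (for instance by taking $P$ to be spanned by effective generators of $\Eff(X/T)$), since the class $[\phi^{-1}_* A_Y]$ lies on a proper face of a nef chamber whenever $\phi$ is not a small modification, and merely asking that the interior of $P$ meet every maximal Mori chamber does not guarantee such boundary classes are captured; this is easily fixed and does not affect the overall argument.
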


In fact, we have the following stronger statement. This proposition is a special case of \cite[Proposition 6.11]{CLLZ25}. For the reader's convenience, we include the proof.

\begin{proposition}\label{prop: strong finiteness}
Let $X$ be of Fano type over $T$. Then for any open subset $V\subset T$ and $Y/V\in \bc(X_V/V)$, there exists an element $W/T\in \bc(X/T)$ such that $W_V/V$ is isomorphic to $Y/V$.
\end{proposition}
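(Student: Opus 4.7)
The plan is to leverage the finiteness of $\bc(X/T)$ from Proposition \ref{prop: finiteness} together with the lifting of a divisor representing a given birational contraction, and then transport the $\mD$-MMP over $T$ to a $D$-MMP over $V$ via base change. To begin, I would shrink $T$ using Corollary \ref{cor: FT and rationally connected}, applied both to $X/T$ and to each element of the finite set $\bc(X/T) = \{W_1/T,\ldots,W_k/T\}$, so that for every open $U \subset T$ and every $t \in U$ the restriction maps $N^1(X_U/U) \to N^1(X_t)$ and $N^1(W_{i,U}/U) \to N^1(W_{i,t})$ are all isomorphisms. After finitely many such shrinkings I obtain an open $T_0 \subset T$; on $T_0$ each $W_i/T_0$ base-changes to an element $W_{i,V}/V \in \bc(X_V/V)$ for every open $V \subset T_0$, giving one inclusion.

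For the converse, fix $V \subset T_0$ open and an arbitrary $Y/V \in \bc(X_V/V)$ with birational contraction $h : X_V \dasharrow Y/V$. Since $Y/V$ is itself of Fano type over $V$, I would pick a sufficiently general ample divisor $A$ on $Y/V$ and set $D := h_*^{-1}A \in \Mov(X_V/V)$; then $D$ lies in the interior of the Mori chamber corresponding to $Y/V$, so $Y/V$ is the unique ample model of $D$ over $V$. Using the isomorphism $N^1(X_V/V) \simeq N^1(X_{T_0}/T_0)$, lift $D$ to a class $\mD \in N^1(X_{T_0}/T_0)$. Because $X_{T_0}/T_0$ is of Fano type, one may run a $\mD$-MMP over $T_0$ terminating at some model $\pi : X_{T_0} \dasharrow W/T_0$ with $\pi_*\mD$ semi-ample; by the finiteness established above, $W/T_0$ must coincide with some $W_i/T_0$.

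It remains to verify that $W_{i,V} \simeq Y/V$, and this is where the main obstacle lies: the $\mD$-MMP over $T_0$ must base-change to a $D$-MMP over $V$ of the same type on each step. The saving grace is that every intermediate model of such a $\mD$-MMP lies in $\bc(X_{T_0}/T_0)$, so only finitely many intermediate models $X_j$ appear across all MMP runs. Applying Theorem \ref{thm: deform pic} and Theorem \ref{thm: nef cone} to each such $X_j$ and shrinking $T_0$ once more to handle all of them uniformly, each extremal ray $R_j \subset \NE(X_j/T_0)$ with $\mD_j \cdot R_j < 0$ restricts to an extremal ray $R_{j,V} \subset \NE(X_{j,V}/V)$ with $\mD_{j,V} \cdot R_{j,V} < 0$, and the exceptional or flipping loci restrict correctly. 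Thus the base-changed process is a genuine $D$-MMP on $X_V/V$ terminating at $W_{i,V}/V$, on which the strict transform of $D$ is semi-ample; by uniqueness of the ample model of $D$ in the interior of its chamber, $W_{i,V} \simeq Y/V$, as required.
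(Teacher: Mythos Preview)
Your proposal has a genuine circularity that prevents it from going through as written. You first shrink $T$ to $T_0$ using the finite set $\bc(X/T)$, and then, in order to make the $\mD$-MMP over $T_0$ restrict step-by-step to a $D$-MMP over $V$, you propose to apply Theorem~\ref{thm: deform pic} and Theorem~\ref{thm: nef cone} to every intermediate model $X_j\in\bc(X_{T_0}/T_0)$ and ``shrink $T_0$ once more.'' Call the result $T_1$. The problem is that the statement you are proving must hold for all $V\subset T_1$, so the MMP you actually need runs on $X_{T_1}/T_1$; its intermediate models lie in $\bc(X_{T_1}/T_1)$, which can strictly contain $\bc(X_{T_0}/T_0)|_{T_1}$, and you have arranged no control over the new members. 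Iterating gives an infinite regress. In the paper's logical order, the assertion that $\bc$ stabilises under shrinking is exactly the proposition you are trying to prove, and it is what later justifies Lemma~\ref{lem: uniform behavior in bc} and Theorem~\ref{mainthm: mmp}; your argument implicitly presupposes those.

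The paper avoids this bootstrapping entirely. It fixes $T_0$ once and for all by spreading out the finitely many elements of $\bc(X_{\bar K}/\bar K)$, so that any $Y'/V$ agrees over $\bar K$ with some $Y^{(1)}/T_0$. Then, instead of tracking an MMP step-by-step and comparing cones on each intermediate model, it repairs $Y^{(1)}$ directly: it shows the induced map $Y^{(1)}_V\dasharrow Y'$ is a birational contraction whose exceptional divisor is vertical and very exceptional over $V$, contracts it by an MMP over $T_0$, and finally passes to the ample model of the strict transform of a very ample divisor on $Y'$. The last step is checked to restrict to an isomorphism $Y^{(6)}_V\simeq Y'$ by a negativity-lemma argument, not by invoking invariance of Mori cones. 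If you want to salvage your line of attack without circularity, the clean way is to bypass the step-by-step MMP altogether: lift $D$ to a $\bQ$-Cartier $\mD$ on $X_{T_0}$ with $\mD|_{X_V}\sim_\bQ D$ and take the ample model $W'=\Proj_{T_0}\bigoplus_m f_*\mO_{X_{T_0}}(m\mD)$; flat base change along the open immersion $V\hookrightarrow T_0$ gives $W'_V\simeq \Proj_V\bigoplus_m (f_V)_*\mO_{X_V}(mD)\simeq Y$ directly.
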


\begin{proof}
Replacing $X$ by a small $\bQ$-factorial modification, we can assume that $X$ is $\bQ$-factorial. Let $h: X_V \dto Y/V$ be the corresponding birational contraction. Let $A$ be an ample divisor on $Y$ over $V$. Let $A'$ be the strict transform of $A$ on $X_V$ and let $B$ be the Zariski closure of $A'$ on $X$. Note that $B_V\coloneqq B|_{X_V}=A'$ and $B$ is a $\bQ$-Cartier divisor as $X$ is $\bQ$-factorial. Let $p: \Gamma \to X_V, q: \Gamma \to Y$ be birational morphisms such that $h=q\circ p^{-1}$. Then we have
\begin{equation}\label{eq: compare pullback}
p^*B_V=q^*A+E,
\end{equation} where $E$ is an effective $p$-exceptional divisor. As $X$ is of Fano type over $T$, there exists a divisor $D$ such that $(X,D)$ has klt singularities and $K_X+D \sim_\bR 0/T$. Hence, there exists $\epsilon>0$ such that $(X,D+\epsilon B)$ still has klt singularities. Thus, we have
\[
K_X+D+\epsilon B \sim_\bR \epsilon B/T.
\]
By \eqref{eq: compare pullback}, $h: X_V \dto Y/V$ is the canonical model of $(X_V/V, D_V+\epsilon B_V)$ over $V$. As $X/T$ is of Fano type, $(X/T, D+\epsilon B)$ admits the canonical model $\theta: X \dto W/T$ over $T$. Since $W/T \in \bc(X/T)$, by the uniqueness of the canonical model, we have $\theta|_{X_V} = h$ by construction.
\end{proof}

\subsection{MMP in a family}
In this subsection, we prove Theorem \ref{thm: fiberwise q-fact} and Theorem \ref{mainthm: mmp}. Recall that
\[
\bc(X/T)_{\rm MMP}\coloneqq \{h \mid X \dasharrow Y/T \text{~is a partial~}\text{MMP}/T \text{~up to isomorphisms of~} Y/T\}
\] is a subset of $\bc(X/T)$.

\begin{lemma}\label{lem: surj preserved for bir contractions}
Let $X \to T$ be a projective fibration. 
\begin{enumerate}
    \item Let $X \dashrightarrow Y/T$ be a divisorial contraction, a flipping contraction, or a flip. Suppose that the natural map $N^1(X/T) \to N^1(X_t)$ is surjective for very general $t \in T$. Then there exists an open subset $T_0\subset T$ such that the natural map
\[
N^1(Y_{T_0}/T_0) \to N^1(Y_t)
\]
is surjective for very general $t \in T_0$.
    \item Assume that $X$ is of Fano type over $T$. Suppose that the natural map $N^1(X/T) \to N^1(X_t)$ is surjective for very general $t\in T$. Then there exists an open subset $T_0 \subset T$ such that for any open subset $V\subset T_0$, if $Y/V \in  \bc(X_V/V)_{\rm MMP}$, then the natural map
    \[
    N^1(Y/V) \to N^1(Y_t)
    \] is surjective for very general $t\in V$.
\end{enumerate}
\end{lemma}
\begin{proof}
We divide the proof into several steps. 

Step 1. First, we prove statement (1). Shrinking $T$, we may assume that $X \dto Y/T$ is not a constant map over the generic point of $T$. If $g: X \to X_1/T$ is a birational extremal contraction (divisorial or flipping contraction), then we have the commutative diagram  \[
	\begin{tikzcd}
	 N^1(X/T) \arrow[d]&N^1(X_1/T)\arrow[d]\arrow[l, "g^*",swap] \\
	 N^1(X_t) &N^1(X_{1,t})\arrow[l, "g_t^*",swap].
	\end{tikzcd}
\] For a very general $t$, if $B$ is a $\bQ$-Cartier divisor on $X_{1,t}$, then $g_t^*B$ is a $\bQ$-Cartier divisor on $X_t$ and thus there exists a $\bQ$-Cartier divisor $D$ on $X$ such that $D|_{X_t} \equiv g_t^*B$. In particular, we have $D \equiv 0/X_1$ as the relative Picard number $\rho(X/X_1)=1$. This implies that there exists a $\bQ$-Cartier divisor $D'$ on $X_1$ such that $D=g^*D'$. Thus we have $D'|_{X_{1,t}} \equiv B$.

Let $g\colon X \to X_1/T$ be a flipping contraction with the flip $g^+\colon X^+ \to X_1/T$. Assume that $H$ is a Cartier divisor on $X$ which is anti-ample over $X_1$. We claim that, for very general $t\in T$, the induced morphism $g_t\colon X_t \to X_{1,t}$ is also an $H_t$-flipping contraction. Here, $g_t$ and $H_t$ denote the restrictions to the fiber $X_t$ of $g$ and $H$, respectively. We use the same convention below. As $g_t$ is a small morphism, it suffices to show $\rho(X_t/X_{1,t})=1$. If $B$ is a $\bQ$-Cartier divisor on $X_t$, then we can assume that there exists a $\bQ$-Cartier divisor $D$ on $X$ such that $D_t \equiv B$ by the above argument. As $\rho(X/X_1)=1$, we have $D-cH \equiv 0/X_1$ for some $c\in \bQ$. In particular, there exists a $\bQ$-Cartier divisor $D'$ on $X_1$ such that $D-cH =g^*D'$. Thus we have $D_t-cH_t =g_t^*D'_t$ for the $\bQ$-Cartier divisor $D'_t$ on $X_{1,t}$ which shows the desired claim. As $g_t^+$ is also a small morphism with $H_t^+$-ample over $X_{1,t}$, we see that $g_t^+: X^+_t \to X_{1,t}$ is the flip of $g_t$. If $B^+$ is a $\bQ$-Cartier divisor on $X_t^+$, then there exists $r \in \bQ$ such that $B^+ -rH^+_t \equiv 0/X_{1,t}$. Hence, there exists a $\bQ$-Cartier divisor $B_1$ on $X_{1,t}$ such that $(g_t^{+})^*B_1=B^+ -rH^+_t$. By the previous argument, there exists a $\bQ$-Cartier divisor $D_1$ on $X_1$ such that $D_{1,t} \equiv B_1$. If $D^+=(g^+)^*D_1$, then we have $D^+_t+rH^+_t\equiv B^+$. This shows the claim.

    Step 2. In this step, we explain that (2) can be reduced to a fixed MMP.
    
    By Proposition \ref{prop: strong finiteness}, for any open subset $V\subset T$ and $Y/V \in \bc(X_V/V)$, there exists some $Y'/T \in \bc(X/T)$ such that $Y \simeq Y'_V/V$. Consider all $Y'/T \in \bc(X/T)$ such that there exists an open subset $U \subset T$ with $Y'_U \in \bc(X_U/U)_{\mathrm{MMP}}$. Since $\bc(X/T)$ is a finite set by Proposition \ref{prop: finiteness}, after shrinking $T$, we may assume that for any open subset $V \subset T$ and any $Y/V \in \bc(X_V/V)_{\mathrm{MMP}}$, there exists some $Y'/T \in \bc(X/T)_{\mathrm{MMP}}$ such that $Y \simeq Y'_V/V$.

    For $V\subset T_0 \subset T$, by the natural maps 
    \[
    N^1(Y'_{T_0}/T_0) \to N^1(Y_V/V) \to N^1(Y_t), t\in V, 
    \] it suffices to find an open subset $T_0\subset T$ such that for each $Y'/T\in \bc(X/T)_{\rm MMP}$, the natural map $ N^1(Y'_{T_0}/T_0) \to N^1(Y_t)$ is surjective for very general $t\in T_0$. As $\bc(X/T)_{\rm MMP}$ is a finite set, it suffices to show this property for a fixed $Y'/T \in \bc(X/T)_{\rm MMP}$.

Step 3. In this step, we show the statement (2). 

If $X \dto Y'$ consists only of divisorial contractions, flipping contractions, and flips, then the claim follows from statement (1).

However, in our definition of a partial MMP, the last step may be a birational contraction whose relative Picard number is greater than $1$. Suppose that $h\colon Z \to Y'/T$ is such a birational contraction between varieties of Fano type over $T$, and that the natural map $N^1(Z/T)\to N^1(Z_t)$ is surjective for very general $t\in T$. Note that $h$ corresponds to the contraction of curves belonging to a face $F$ of the Mori cone $\NE(Z/Y')$. We can contract an extremal ray of $F$ to obtain a morphism $h': Z \to Z'$ and thus $h$ factors through $h'$. By repeating this process, we can further assume that $h$ is an extremal contraction. Then the desired property follows from Step 1. 
\end{proof}

Using Lemma \ref{lem: surj preserved for bir contractions}, we now prove the claim concerning fiber-wise small $\bQ$-factorial modifications.

\begin{proof}[Proof of Theorem \ref{thm: fiberwise q-fact}]
    Let $h: Y \to X$ be a log resolution of $(X, \Delta)$. Shrinking $T$, we can assume that $Y \to T$ is a smooth morphism. As for any $s\in S$, the fiber $X_s$ has rational singularities with $h^{i}(X_s, \mO_{X_s})=0$ for $i=1,2$, we can assume that the same property still holds for $Y_s$ with $s\in S$ after shrinking $T$. By Theorem \ref{thm: deform pic} (2), there exists a generically finite base change $T' \to T$  and an open subset $T'_0\subset T'$ such that $N^1(X_{T_0'}/T_0') \to N^1(X_{t'})$ is surjective for each $t' \in T_0'$. Shrinking $T_0'$, we can assume that $T_0'$ is a smooth variety. Thus $Y_{T'_0}$ is still a smooth variety. Applying Theorem \ref{thm: deform pic} (2) to $Y_{T'_0} \to T'_0$, there exists a generically finite base change $T'' \to {T'_0}$ and an open subset $T_0'' \subset T''$ such that $N^1(Y_{T_0''} / T_0'') \to N^1(Y_{t''})$ is surjective for each $t'' \in T_0''$. Note that the map $N^1(X_{T''_0}/T''_0) \to N^1(X_{t''})$ remains surjective for each $t''\in T''_0$, since this property is preserved under further base change. Shrinking $T_0''$, we can assume that there still exists a $\bQ$-divisor $\Delta''$ such that $(X_{T_0''}, \Delta'')$ has klt singularities. Thus, we can replace $T''_0$ by $T$ and assume that
    \[
    N^1(X/T) \to N^1(X_{t}) \text{~and~} N^1(Y/T) \to N^1(Y_{t})
    \] are surjective for each $t\in T$.

Now we proceed with the standard process of small $\bQ$-factorial modifications. Let $K_Y+ \Delta_Y = h^*(K_X+\Delta)+E$ such that $\Supp E = \Supp \Exc(h)$ and $(Y, \Delta_Y)$ has klt singularities. Shrinking $T$, we can assume that $h$ is a fiber-wise resolution such that the same property of $K_{Y_t}+ \Delta_{Y_t} = h_t^*(K_{X_t}+\Delta_t)+E_t$ holds fiber-wise, where $\Delta_{Y_t}=(\Delta_Y )|_{Y_t}$. By \cite{BCHM10}, there exists a $(K_Y+\Delta_Y)$-MMP over $X$ which terminates and only $E$ is contracted in this process. Denote this MMP$/X$ by 
\[
g: Y=Y_0 \dto Y_1 \dto \cdots \dto Y_n=W /X.
\] Moreover, $g$ only consists of divisorial contractions, flipping contractions and flips. Shrinking $T$, we can assume that if $Y_j \to Y_{j+1}$ (resp. $Y_{j+1} \to Y_{j}$) is a divisorial or a small contraction (resp. small contraction), then for each $t\in T$, $Y_{j,t} \to Y_{j+1,t}$ (resp. $Y_{j+1,t} \to Y_{j,t}$) is still a divisorial or a small contraction (resp. small contraction).

Note that $W \to X$ is a small $\bQ$-factorial modification of $X$. We claim that it is also a small $\bQ$-factorial modification of each fiber after possible shrinking $T$ again.

By Lemma \ref{lem: surj preserved for bir contractions} (1), after shrinking $T$, we have a surjection $N^1(Y_j/T) \to N^1(Y_{j,t})$ for $0 \leq j \leq n$ over very general point $t\in T$. By Theorem \ref{thm: deform pic} (1), after shrinking $T$, we can assume that $N^1(Y_j/T) \to N^1(Y_{j, t})$ is an isomorphism for each $t\in T$ and $0 \leq j \leq n$. 

Suppose that $Y_j \to Y_{j+1}$ is a $(K_{Y_j}+\Delta_{Y_j})$-negative extremal contraction (divisorial or flipping contraction). By Lemma \ref{lem: trivial div}, after shrinking $T$, we have the short exact sequences
\[
\begin{split}
0 \to &N^1(Y_{j+1}/T) \to N^1(Y_j/T) \to N^1(Y_j/Y_{j+1}) \to 0, ~ 0 \leq j \leq n\\
    0 \to &N^1(Y_{j+1, t}) \to N^1(Y_{j,t}) \to N^1(Y_{j,t}/Y_{j+1, t}) \to 0,~t\in T.
\end{split}
\] By the Five Lemma, the commutativity of the following diagram 
\[
\begin{tikzcd}
	0\arrow[r] & N^1(Y_{j+1}/T) \arrow[d, "\simeq"] \arrow[r] &N^1(Y_j/T)\arrow[r] \arrow[d, "\simeq"]& N^1(Y_j/Y_{j+1}) \arrow[r] & 0 \\
	 0\arrow[r] & N^1(Y_{j+1, t}) \arrow[r] &N^1(Y_{j,t})\arrow[r] & N^1(Y_{j,t}/Y_{j+1,t}) \arrow[r] & 0.
	\end{tikzcd}
\] implies that $N^1(Y_j/Y_{j+1}) \simeq N^1(Y_{j,t}/Y_{j+1,t})$. In particular, $Y_{j,t} \to Y_{j+1,t}$ is still a $(K_{Y_{j,t}}+\Delta_{Y_{j,t}})$-negative extremal contraction. Hence, $Y_{j,t} \to Y_{j+1,t}$ is a divisorial (resp. flipping) contraction if and only if $Y_{j} \to Y_{j+1}$ is a divisorial (resp. flipping) contraction. 

Suppose that $Y_j \to Y_{j+1}$ is a $(K_{Y_j}+\Delta_{Y_j})$-negative flipping contraction with flip $Y_{j+2} \to Y_{j+1}$. As $(K_{Y_{j+2,t}}+\Delta_{Y_{j+2,t}})$ is ample over $Y_{j+1}$, we see that $Y_{j+2,t} \to Y_{j+1,t}$ is the flip of the flipping contraction $Y_{j,t} \to Y_{j+1,t}$.

The above shows that for each $t\in T$, 
\[
g_t: Y_t=Y_{0,t} \dto Y_{1,t} \dto \cdots \dto Y_{n,t}=W_t /X_t
\] is also a $(K_{Y_t}+\Delta_{Y_t})$-MMP over $X_t$ which terminates. Thus, the exceptional divisor $E_t$ is contracted. As $\Supp E_t = \Supp \Exc(h_t)$, we see that $W_t \to X_t$ is a small $\bQ$-factorial modification.

Finally, by Theorem \ref{thm: deform pic}, after shrinking $T$, for any open subset $U\subset T$, the natural restriction maps
\[
    N^1(W/T) \to N^1(W_U/U) \to N^1(W_t), \quad N^1(X/T) \to N^1(X_U/U) \to N^1(X_t)
\] are isomorphisms for any $t\in U$.
\end{proof}

    The following proposition shows that if $X$ is $\bQ$-factorial and of Fano type over $T$, then any of its birational contraction models can be obtained via a partial MMP over $T$.

\begin{proposition}\label{prop: bc=bcmmp}
If $X$ is a $\mathbb{Q}$-factorial variety of Fano type over $T$, then
\[
\bc(X/T)=\bc(X/T)_{\rm MMP}.
\]
\end{proposition}
\begin{proof}
By definition, we have $\bc(X/T)\supset\bc(X/T)_{\rm MMP}$.

For the inverse inclusion, let $Y/T \in \bc(X/T)$. We need to show that $Y$ can be obtained from $X$ via an MMP over $T$, followed by a birational contraction. More precisely, up to an isomorphism of $Y/T$, we need to show that the birational contraction $X \dto Y/T$ corresponding to $Y$ factors as a $B$-MMP over $T$, $X \dto Z/T$, followed by the birational contraction $Z \to Y/T$ induced by the big and semi-ample divisor $B_Z$.
    
    Let $A'$ be an ample$/T$ divisor on $Y$, and let $A$ be its strict transform on $X$. Let $p: W \to X$ and $q: W \to Y$ be birational morphisms such that $q\circ p^{-1}$ is the map $X \dto Y$. As $X$ is a $\bQ$-factorial variety, we can write
    \[
    p^*A+E=q^*A'+F,
    \] where $E, F$ are effective $q$-exceptional divisors. Let $B=p_*(p^*A+E)$ be a $\bQ$-Cartier divisor on $X$, then 
    \[
    p^*B+E_1=p^*A+E+E_2,
    \] where $E_1, E_2$ are effective $p$-exceptional divisors. Since $X \dto Y$ is a birational contraction, $E_1, E_2$ are also $q$-exceptional divisors. We have
    \[
    p^*B+E_1=p^*A+E+E_2=q^*A'+F+E_2.
    \] Since $A'$ is ample$/T$ and both $F$ and $E_2$ are $q$-exceptional divisors, the divisors $q^*A'$ and $F + E_2$ form the positive and negative parts, respectively, of the Nakayama-Zariski decomposition of $p^*B + E_1$ over $T$. Let $X \dto Z/T$ be a $B$-MMP$/T$ such that $B_Z$, the strict transform of $B$, is semi-ample$/T$. Replacing $W$ by a higher model, we may assume that there exists a birational morphism $r: W \to Z$. This modification does not change the divisor $B$. We have
    \[
    p^*B = r^*B_Z+E_Z,
    \] where $E_Z$ is an effective $r$-exceptional divisor. Since $E_1$ is also $r$-exceptional, $p^*B+E_1$ admits the Nakayama-Zariski decomposition whose positive part is $r^*B_Z$ and the negative part is $E_Z+E_1$. By the uniqueness of the Nakayama-Zariski decomposition, we have $E_Z+E_1=F+E_2$ and $r^*B_Z=q^*A'$. As $A'$ is ample$/T$, the natural birational map $g: Z \dto Y/T$ is indeed a morphism.
\end{proof}

 \begin{lemma}\label{lem: uniform behavior in bc}
Let $X$ be of Fano type over $T$. Then, up to a generically finite base change of $T$, there exists an open subset $T_0\subset T$ such that the following properties hold:
\begin{enumerate}
\item for any open subset $V\subset T_0$, if $Y/V\in \bc(X_{V}/V)$, then the natural maps
\[
N^1(Y/V) \to N^1(Y_t), \quad \Nef(Y/V) \to \Nef(Y_t), \quad \NE(Y_t) \to \NE(Y/V)
\] are isomorphisms for any $t\in V$;
\item for any $Y/T_0\in \bc(X/T_0)$, $Y$ is flat over $T_0$, and $Y_t$ is an irreducible normal variety for each $t\in T_0$. 

\item if $V\subset T_0$ is an open subset and $g: Y \to Z/V$ is a contraction (may not be birational) for some $Y \in \bc(X_{V}/V)$, then $g_t: Y_t \to Z_t$ is still a contraction for each $t\in V$. Moreover, $g$ is a divisorial contraction (resp. a small contraction, a Mori fiber space) if and only if $g_t$ for any $t\in V$ is a divisorial contraction (resp. a small contraction, a Mori fiber space).
\end{enumerate}
 \end{lemma}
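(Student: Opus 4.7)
The overall strategy is to reduce the problem to the finite setting given by Proposition~\ref{prop: strong finiteness}, after which each assertion can be verified for one model at a time and then combined. I would begin by replacing $T$ with the open subset furnished by Proposition~\ref{prop: strong finiteness}, so that $\bc(X/T_0)$ is finite and every $Y/V \in \bc(X_V/V)$ with $V \subset T_0$ arises (up to isomorphism over $V$) as the base change $W_V/V$ of some $W/T_0 \in \bc(X/T_0)$. Each such $W$ is itself of Fano type over $T_0$, so all deformation invariance results established in the preceding sections apply to $W/T_0$.

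For (1), I would apply Corollary~\ref{cor: FT and rationally connected} and Corollary~\ref{cor: nef cone for FT} to each $W/T_0 \in \bc(X/T_0)$ to obtain an open subset $T_W \subset T_0$ over which the N\'eron--Severi, nef, and Mori cone restriction maps are isomorphisms on fibers. Since $\bc(X/T_0)$ is finite, the intersection of these $T_W$ is Zariski open and provides the desired uniform $T_0$. For (2), generic flatness applied to each of the finitely many $W \to T_0$ yields flatness after a further shrinking. Since $W$ is of Fano type over $T_0$, there exists a klt boundary $\Delta_W$ with $K_W + \Delta_W \sim_{\bQ} 0/T_0$; after possibly shrinking $T_0$, restricting a log resolution of $(W, \Delta_W)$ to a general fiber yields a log resolution certifying klt singularities on $(W_t, \Delta_W|_{W_t})$, so $W_t$ is normal. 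Irreducibility of $W_t$ for $t$ in an open subset follows from the geometric integrality of the generic fiber of the fibration $W \to T_0$ together with the openness of this property (see \cite{EGAIV}).

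For (3), suppose $g: Y \to Z/V$ is a contraction with $Y = W_V$ for some $W/T_0 \in \bc(X/T_0)$. Because $W$ is of Fano type, every nef divisor on $W/T_0$ is semi-ample by the base-point-free theorem, and thus $g$ is determined by an extremal face of $\Nef(Y/V)$; by part (1), this face corresponds to an extremal face of $\Nef(W/T_0)$, yielding a contraction $\tilde g: W \to \tilde Z/T_0$ with $\tilde g_V \simeq g$. There are only finitely many such extremal faces for each of the finitely many $W$, so only finitely many $\tilde g$ need be considered. After a further shrinking of $T_0$, each $\tilde g$ becomes flat and cohomology and base change delivers $(\tilde g_t)_* \mO_{W_t} = \mO_{\tilde Z_t}$, so $g_t$ is a contraction on each fiber. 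The type is preserved: the divisorial versus small distinction is read off from the fiberwise dimension of $\Exc(\tilde g_t)$, which is constant after shrinking; extremality $\rho(W_t/\tilde Z_t) = 1$ follows from the isomorphism of relative Picard numbers in (1); and the Mori fiber condition $\dim \tilde Z_t < \dim W_t$ follows from constancy of relative dimensions.

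The principal obstacle is ensuring that the semi-ampleness (and not merely the nefness) of face generators on $W/T_0$ descends to each fiber $W_t$, so that each extremal contraction $\tilde g$ restricts fiberwise to a genuine contraction rather than just an algebraic morphism between pieces. This is underwritten by the deformation invariance of the nef cones from Corollary~\ref{cor: nef cone for FT} combined with the base-point-free theorem applied uniformly to the Fano type fibration $W/T_0$, which together allow the semi-ampleness on the total space to descend to all fibers after finitely many shrinkings of $T_0$.
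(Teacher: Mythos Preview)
Your proposal is correct and follows essentially the same approach as the paper: reduce via Proposition~\ref{prop: strong finiteness} to the finite set $\bc(X/T_0)$, apply Corollaries~\ref{cor: FT and rationally connected} and~\ref{cor: nef cone for FT} to each model for part~(1), use generic flatness and normality of fibers for part~(2), and exploit the finiteness of extremal faces of the polyhedral nef cone for part~(3). The only minor difference is that for~(2) the paper invokes constructibility of the locus $\{p \in T \mid Y_p \text{ is normal}\}$ directly rather than your klt--log-resolution argument, but both routes are standard and equivalent in effect.
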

 \begin{proof}
By Theorem \ref{thm: fiberwise q-fact}, there exists a generically finite base change $T' \to T$ and an open subset $T_0'\subset T'$ such that $X_{T_0'}$ admits a small $\bQ$-factorial modification $W \to X_{T_0'}$ which is also a fiber-wise small $\bQ$-factorial modification over $T_0'$. Moreover, the natural map $N^1(W/U) \to N^1(W_t), t\in U$ is an isomorphism, where $U\subset T_0'$ is an open subset. Note that $W$ is of Fano type over $T_0'$ after shrinking $T_0'$. By Proposition \ref{prop: bc=bcmmp}, we have
\[
\bc(W/T_0')_{\rm MMP}=\bc(W/T'_{0})= \bc(X_{T_0'}/T_0').
\] Replacing $W$ by $X$ and $T_0'$ by $T$, we may assume that $Y/V \in \bc(X_V/V)_{\rm{MMP}}$ and that the natural map $N^1(X_V/V) \to N^1(X_t)$ is an isomorphism for any $t \in V \subset T$.  By Proposition \ref{prop: strong finiteness} and Proposition \ref{lem: surj preserved for bir contractions} (2), after shrinking $T$, we can assume that for any open subset $V\subset T$ and $Y/V \in \bc(X_V/V)_{\rm MMP}$, $N^1(Y/V) \to N^1(Y_t)$ is surjective for very general $t\in V$. By Theorem \ref{thm: deform pic} (1) and the finiteness of $\bc(X_V/V)_{\rm MMP}$, after shrinking $T$, we can assume that
\[
N^1(Y/V) \to N^1(Y_t), t\in V
\] is an isomorphism for any open subset $V\subset T$ and any $Y/V \in \bc(X_V/V)_{\rm MMP}$. By Corollary \ref{cor: nef cone for MDS}, after shrinking $T$, we can assume that for any $Y/T \in \bc(X/T)_{\rm MMP}$ and any open subset $V\subset T$, the natural maps
 \[
\Nef(Y_V/V) \to \Nef(Y_t),  \quad \NE(Y_t) \to \NE(Y/V)
\] are isomorphisms for each $t\in V$. This shows (1). Since each step in the above process remains valid after shrinking $T$, the statement (1) continues to hold after shrinking $T$.

For (2), after shrinking $T$, we can assume that for any $Y/T\in \bc(X/T)$, $Y$ is flat over $T$ by generic flatness, and $Y_t$ is an irreducible normal variety for each $t\in T$ as $\{p\in T \mid Y_p \text{~is normal}\}$ is a constructible set. Note that, in the above set, $p$ is not restricted to closed points, and the generic point of $T$ belongs to this set as $Y$ is normal.

To show (3), first note that by (1), we have $\Nef(Y/T) \simeq \Nef(Y_V/V)$ for any $Y/T\in \bc(X/T)$ and open subset $V\subset T$. As $\Nef(Y/T)$ is a polyhedral cone, there are only finitely many contractions (not just birational contractions). Moreover, if $h: Y' \to Z'/V$ is a contraction for some $Y'/V \in \bc(X_V/V)$, then there exist $Y/T \in \bc(X/T)$ and a contraction $g: Y \to Z/T$ such that $g_V= h$. As $\NE(Y/T) \simeq \NE(Y_V/V) \simeq \NE(Y_t)$, we see that $g, g_V, g_t$ are extremal contractions simultaneously. Shrinking $T$ further, we can assume that $g$ is a divisorial contraction (resp. a small contraction, a Mori fiber space) if and only if $g_t$ is a divisorial contraction (resp. a small contraction, a Mori fiber space) for each $t\in V$.
 \end{proof}

\begin{proof}[Proof of Theorem \ref{mainthm: mmp}]
By Theorem \ref{thm: FT is constructible}, after shrinking $T$, we can assume that $X/T$ is of Fano type. After replacing $T$ by a generically finite base change and possibly shrinking it, we may assume that the properties listed in Lemma \ref{lem: uniform behavior in bc} hold.

First, by Proposition \ref{prop: finiteness} and Proposition \ref{prop: strong finiteness}, it suffices to consider a fixed birational contraction $g: X' \dto Y/U$. Take birational morphisms $p\colon W \to X'$ and $q\colon W \to Y$ such that $g = q \circ p^{-1}$. Shrinking $U$, we may assume that for any $t \in U$, the induced map
\[
g_t \colon X'_t \dashrightarrow Y_t
\]
is still a birational contraction, and that $p_t$ and $q_t$ are birational morphisms. Moreover, we may further assume that if a divisor $E$ on $W$ is $p$-exceptional (resp. $q$-exceptional), then for every $t \in U$, the divisor $E|_{W_t}$ is $p_t$-exceptional (resp. $q_t$-exceptional). 
By construction, for any $\bR$-Cartier divisor $\mD$ on $X'$, we have 
\[
p^*\mD = q^*(g_*\mD) +E,
\] where $E$ is $q$-exceptional. Hence, we have
\[
p_t^*\mD|_{X_t'} \sim_\bR q_t^*((g_*\mD)|_{Y_t}) + E|_{W_t}.
\] As $g_t$ is a birational contraction and $E|_{W_t}$ is $q_t$-exceptional, we see 
\[
g_{t,*}(\mD|_{X_t'}) \sim_\bR (g_*\mD)|_{Y_t}.
\] 

Next, we show that an MMP of the total space is an MMP of each fiber of the same type. For any $\mD \in N^1(X/T)$, let 
\[
X=X_0 \dasharrow X_1 \dasharrow \cdots \dto X_{n-1}\dasharrow X_n \to X_{n+1}
\] be a $\mD$-MMP over $T$,  where $X_i \dasharrow X_{i+1}$ for $i=0, \ldots, n-1$ are birational contractions, and $X_n \to X_{n+1}$ is either a contraction induced by the semi-ample$/T$ divisor $\mD_{X_{n}}$ or a Mori fiber space when $\mD$ is not pseudo-effective$/T$. Thus,  we have $X_i/T \in \bc(X/T)_{\rm MMP}$ for $0 \leq i \leq n$, and $X_{n+1}/T\in \bc(X/T)_{\rm MMP}$ if $X \dasharrow X_{n+1}$ is birational. To be precise, this means that the above natural birational contraction $X \dasharrow X_i/T$ belongs to $\bc(X/T)_{\rm MMP}$. By Lemma \ref{lem: uniform behavior in bc} (3), if $g: X_i \dasharrow X_{i+1}$ 
is a divisorial contraction (resp. a small contraction, an extremal contraction, a Mori fiber space) if and only if $g_t$ 
is a divisorial contraction (resp. a small contraction, an extremal contraction, a Mori fiber space) for each $t\in T$. Hence, for each $t\in T$,
\[
X_t=X_{0,t} \dasharrow X_{1,t} \dasharrow \cdots \dasharrow X_{n,t} \to X_{n+1, t}
\] is a $\mD_t$-MMP on $X_t$ of the same type. 

Conversely, we show that an MMP of the fiber is induced from an MMP$/T$ of the total space. Suppose that 
\begin{equation}\label{eq: MMP fiber}
X_t=Y_{0} \dasharrow Y_{1} \dasharrow \cdots \dasharrow Y_{n} \to W
\end{equation} 
is a $D$-MMP on $X_t$. By Lemma \ref{lem: uniform behavior in bc} (1), there exists $\mD\in N^1(X/T)$ such that $\mD_t\sim_\bR D$. If $\sigma_0: Y_0 \to Y_1$ is a divisorial contraction (resp. a small contraction, a Mori fiber space), then by Lemma \ref{lem: uniform behavior in bc} (1), there exists a contraction $g_0: X_0 \to X_1$ that contracts the same extremal ray as $\sigma_0$, and thus $g_{0,t}=\sigma_0$. By Lemma \ref{lem: uniform behavior in bc} (3), $g_0$ is still a divisorial contraction (resp. a small contraction, a Mori fiber space). When $\sigma_0$ is a flipping contraction, let $\sigma_1: Y_2 \to Y_1$ be its flip. Let $g_1: X_2 \to X_1$ be the flip of the flipping contraction $g_0: X_0 \to X_1$. By Lemma \ref{lem: uniform behavior in bc} (2) (3), $g_{1,t}: X_{2,t} \to X_{1,t}$ is an extremal contraction between normal varieties. Moreover, $\mD_{X_2}|_{X_{2,t}}$ is ample over $X_{1,t}$ as $\mD_{X_2}$ is ample over $X_1$, where $\mD_{X_2}$ is the strict transform of $\mD$ on $X_2$. Therefore, $g_{1,t}$ is exactly $\sigma_1$. Repeating this process, we obtain a $\mD$-MMP on $X/T$ whose restriction to the fiber $X_t$ is exactly \eqref{eq: MMP fiber}. Moreover, this $\mD$-MMP terminates when \eqref{eq: MMP fiber} terminates by Lemma \ref{lem: uniform behavior in bc} (1) (3).

Finally, if $Y/U\in \bc(X_{U}/U)_{\mathrm{MMP}}$, then (3) directly follows from Lemma \ref{lem: uniform behavior in bc} (1). In general, by Theorem \ref{thm: fiberwise q-fact}, after a generic base change of $T$ and shrinking it, there exists a fiber-wise small $\bQ$-factorial modification $W \to X/T$. By Proposition \ref{prop: bc=bcmmp}, we have $Y/U \in \bc(W_U/U)_{\rm MMP}$. Then by the same argument as before, (3) follows from Lemma \ref{lem: uniform behavior in bc} (1).
\end{proof}

\subsection{Deformation of effective cones}\label{subsec: def eff cone}

As an application of Theorem \ref{mainthm: mmp}, we show the following result on the deformation invariance of effective cones.

\begin{proposition}\label{prop: def of eff cone}
Let $f: X \to T$ be a projective fibration. Suppose that there exists a Zariski dense subset $S\subset T$ such that the fibers $X_s$ for $s\in S$ satisfy one of the cases in Theorem \ref{thm: FT is constructible}. Then, up to a generically finite base change of $T$, there exists a non-empty open subset $T_0\subset T$, such that for any $U\subset T_0$, we have
\[
\Eff(X_{T_0}/T_0) \simeq \Eff(X_U/U) \simeq \Eff(X_t), t\in U
\]
 under the natural restriction maps.
\end{proposition}
\begin{proof}
After replacing $T$ by a generically finite base change and possibly shrinking it, we may assume that Theorem \ref{mainthm: mmp} holds. By Theorem \ref{thm: FT is constructible}, after shrinking $T$, we can assume that $X/T$ is of Fano type. If $\mD\in \Eff(X/T)$, then we can run a $\mD$-MMP$/T$, $X \dto X_n$, which terminates at $X_n$ such that $\mD_{X_n}$ is nef $/T$. By Theorem \ref{mainthm: mmp} (2), it is also a $\mD_t$-MMP on $X_t$ for any $t\in T$. This shows that under the identification $N^1(X/T) \simeq N^1(X_t)$ (see Theorem \ref{mainthm: mmp} (4)), we have
\[
\Eff(X/T) \subset \Eff(X_t).
\]

Conversely, take $D\in \Eff(X_t)$. Let $X_t \dasharrow Y$ be a $D$-MMP such that $D_Y$ is nef on $Y$. Then by Theorem \ref{mainthm: mmp} (3), it is the restriction of a $\mD$-MMP$/T$ on $X$, where $\mD$ is a divisor on $X$ such that $[\mD_t]=[D]$. Hence we have $[\mD]\in \Eff(X/T)$. This shows the inverse inclusion $\Eff(X/T) \supset \Eff(X_t)$.
\end{proof}

\subsection{Deformation of movable cones and Mori chamber decompositions}

The deformation invariance of movable cones and Mori chamber decompositions is also a consequence of Theorem \ref{mainthm: mmp}.

If $X$ is of Fano type over $T$ and $[D] \in \Mov(X/T) = \bMov(X/T)$ is a movable divisor, then there exists a $D$-MMP$/T$, $\phi: X \dto Y/T$, which terminates at $Y$. Note that $X \dto Y$ is isomorphic in codimension $1$ and if $D_Y$ is the strict transform of $D$ on $Y$, then $[D_Y]\in \Nef(Y/T)$. 
Conversely, if $\phi: X \dto Y$ is isomorphic in codimension $1$, then $\phi^{-1}_{*}\Nef(Y/T) \cap N^1(X/T) \subset \Mov(X/T)$. Here the chamber $\phi^{-1}_{*}\Nef(Y/T) \cap N^1(X/T)$ consists of $\bR$-Cartier divisors which are strict transforms of nef divisors on $Y/T$. 

Therefore, $\Mov(X/T)$ admits the following Mori chamber decomposition
\begin{equation}\label{eq: Mori chamber decomp1}
\Mov(X/T)=\bigcup_{\substack{\phi: X \dto Y \text{~isom.~in codim.~} 1,\\Y/T \in \bc(X/T)_{\rm MMP}}} \left(\phi^{-1}_{*}\Nef(Y/T) \cap N^1(X/T)\right).
\end{equation} We emphasize that $Y/T \in \bc(X/T)_{\mathrm{MMP}}$ indeed represents the map $\phi$.

This Mori chamber decomposition may differ from 
\begin{equation}\label{eq: Mori chamber decomp2}
\Mov(X/T)=\bigcup_{\phi: X \dto Y \text{~isom.~in codim.~} 1} \left(\phi^{-1}_{*}\Nef(Y/T) \cap N^1(X/T)\right),
\end{equation} where all birational maps that are isomorphic in codimension $1$ are considered. Indeed, if $\psi: \tilde X \to X$ is a non-isomorphic small $\bQ$-factorial modification, then the chamber $\left(\psi_{*}\Nef(\tilde X/T) \cap N^1(X/T)\right)$ appears in \eqref{eq: Mori chamber decomp2}. However, it does not appear in \eqref{eq: Mori chamber decomp1} because $X \dto \tilde X$ does not belong to $\bc(X/T)_{\rm MMP}$. On the other hand, if $X$ is $\bQ$-factorial, then Proposition \ref{prop: bc=bcmmp} implies that the two chamber decompositions are the same. It is the Mori chamber decomposition given in \eqref{eq: Mori chamber decomp1} that will be considered below. (A similar statement can be obtained for the decomposition in \eqref{eq: Mori chamber decomp2} by passing to a $\bQ$-factorial modification, but the formulation is more involved and will be omitted here.)

\begin{proposition}\label{prop: mov}
Let $f: X \to T$ be a projective fibration. Suppose that there exists a subset $S\subset T$ such that the fibers $X_s$ for $s\in S$ satisfy one of the cases in Theorem \ref{thm: FT is constructible}. Then, up to a generically finite base change of $T$, there exists a Zariski open subset $T_0\subset T$, such that for any $U\subset T_0$, we can identify the Mori chamber decompositions of $\Mov(X_{T_0}/T_0), \Mov(X_{U}/U)$ and $ \Mov(X_t), t\in U$ under the natural restriction maps. In particular, we have isomorphisms among movable cones 
\[
\Mov(X_{T_0}/T_0) \to \Mov(X_U/U) \to \Mov(X_t), t\in U
\] under the natural restriction maps.
\end{proposition}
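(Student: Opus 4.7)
The plan is to combine the stability from Lemma \ref{lem: uniform behavior in bc} for the total space and all its small modifications with the correspondence of MMP steps established in Theorem \ref{mainthm: mmp}, reducing the problem to the invariance of nef cones on each chamber. By Theorem \ref{thm: FT is constructible}, after shrinking $T$ we may assume that $X$ is of Fano type over $T$. By Proposition \ref{prop: strong finiteness}, after a further shrinking we may assume that $\bc(X/T_0)$ is a finite set whose restriction to any open $V \subset T_0$ realizes $\bc(X_V/V)$. Applying Lemma \ref{lem: uniform behavior in bc} to each of these finitely many $Y/T_0 \in \bc(X/T_0)$, we may shrink $T_0$ once more so that $N^1(Y/T_0) \simeq N^1(Y_U/U) \simeq N^1(Y_t)$ and $\Nef(Y/T_0) \simeq \Nef(Y_U/U) \simeq \Nef(Y_t)$ for every open $U \subset T_0$ and every $t \in U$.

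Next, I would invoke the Mori chamber decomposition
\[
\Mov(X/T_0) = \bigcup_i \left( \phi_{i,*}^{-1} \Nef(Y_i/T_0) \cap N^1(X/T_0) \right),
\]
where $\phi_i: X \dashrightarrow Y_i$ ranges over the finitely many small birational modifications of $X$ over $T_0$, which are exactly the elements of $\bc(X/T_0)$ that are isomorphisms in codimension one. For each such $Y_i$, the identifications above match the chamber $\phi_{i,*}^{-1}\Nef(Y_i/T_0)$ with $\phi_{i,U,*}^{-1}\Nef(Y_{i,U}/U)$ and with $\phi_{i,t,*}^{-1}\Nef(Y_{i,t})$ under the natural restriction maps on $N^1$. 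Thus each chamber of $\Mov(X/T_0)$ restricts bijectively to a chamber of $\Mov(X_U/U)$ and of $\Mov(X_t)$.

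To complete the picture, I need the reverse direction: every Mori chamber of $\Mov(X_t)$ (resp.\ of $\Mov(X_U/U)$) arises from restriction. Any small $\bQ$-factorial modification $\psi: X_t \dashrightarrow Z$ is achieved by a finite sequence of flips, and Theorem \ref{mainthm: mmp}(2) lifts this sequence to a sequence of flips on $X/T_0$ of the same type; by Lemma \ref{lem: uniform behavior in bc}(3), the resulting birational map $\Psi: X \dashrightarrow Z'$ is still a small modification, with $Z'_t \simeq Z$, so it belongs to the list $\{\phi_i\}$. The identical argument works over $U$. Hence the Mori chambers of $\Mov(X_t)$ and $\Mov(X_U/U)$ are precisely the restrictions of those of $\Mov(X/T_0)$, giving the claimed identification of chamber decompositions; the identification of the movable cones themselves follows by taking the union of chambers.

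The most delicate point I anticipate is ruling out spurious coincidences between distinct chambers after restriction to a fiber: a priori two chambers $\phi_{i,*}^{-1}\Nef(Y_i/T_0)$ and $\phi_{j,*}^{-1}\Nef(Y_j/T_0)$ might become indistinguishable on $X_t$. This is controlled by the simultaneous identifications $N^1(Y_i/T_0) \simeq N^1(Y_{i,t})$ and $\Nef(Y_i/T_0) \simeq \Nef(Y_{i,t})$ from Lemma \ref{lem: uniform behavior in bc}(1), which force $\phi_{i,t,*}^{-1}\Nef(Y_{i,t}) \neq \phi_{j,t,*}^{-1}\Nef(Y_{j,t})$ whenever the chambers on $X/T_0$ are distinct. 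Combined with the bijective correspondence of small modifications established above, this ensures the Mori chamber decompositions match on the nose.
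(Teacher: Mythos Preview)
Your proposal is correct and follows essentially the same approach as the paper: reduce to the Fano type setting via Theorem \ref{thm: FT is constructible}, then use the MMP correspondence of Theorem \ref{mainthm: mmp} together with the nef cone identifications of Lemma \ref{lem: uniform behavior in bc} to match chambers in both directions. The paper organizes the argument slightly differently---showing each chamber of $\Mov(X_{T_0}/T_0)$ lands inside a chamber of $\Mov(X_t)$ and vice versa by explicitly running a $\mD$-MMP (resp.\ $D$-MMP) and restricting (resp.\ lifting) it---which makes your ``spurious coincidence'' step unnecessary, but the content is the same.
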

\begin{proof}
After replacing $T$ by a generically finite base change and possibly shrinking it, we may assume that Theorem \ref{mainthm: mmp} holds. By Theorem \ref{thm: FT is constructible}, after shrinking $T$, we can assume that $X/T$ is of Fano type. By Theorem \ref{mainthm: mmp} (2) and (3), an MMP$/T$ on $X$ restricts to an MMP of the same type on each fiber $X_t$, and the converse also holds. Therefore, $\Mov(X/T)$ and $\Mov(X_t)$ share the same chambers under the identification $N^1(X/T) \simeq N^1(X_t)$ (see Theorem \ref{mainthm: mmp} (4)). 
\end{proof}

With these preparations, we are ready to prove Theorem \ref{mainthm: decomposition}.

\begin{proof}[Proof of Theorem \ref{mainthm: decomposition}]
To show (1), by Theorem \ref{thm: FT is constructible}, after shrinking $T$,
we may assume that $X$ is of Fano type over $T$. By Theorem
\ref{thm: deform pic} (1), there exists a non-empty Zariski open subset
$T_0\subset T$ such that 
\begin{equation}\label{eq: iso for NSs}
N^1(X_{T_0}/T_0) \simeq N^1(X_t), t\in T_0.
\end{equation}
In particular, $N^1(X_t)$ is deformation invariant for
$t\in T_0$. After shrinking $T_0$ further, $\Nef(X_t)$ is deformation
invariant for $t\in T_0$ by Corollary \ref{cor: nef cone for MDS}. The invariance of $\Eff(X_t)$, $\Mov(X_t)$, and the Mori chamber
decompositions follows from the proofs of Proposition
\ref{prop: def of eff cone} and Proposition \ref{prop: mov}. Indeed, once
we have \eqref{eq: iso for NSs}, no generically finite base change is needed; it suffices to shrink $T_0$.

For (2), after a generically finite base change of $T$, Theorem
\ref{mainthm: mmp} (4) allows us to assume that
$N^1(X/T) \to N^1(X_t)$ is an isomorphism for every $t\in T$. Hence, (2) follows from (1).
\end{proof}

Let $X/T$ be a relative Mori dream space with $X$ being a $\bQ$-factorial variety. Then the effective cone also admits a chamber decomposition; see \cite[Proposition 1.11 (2)]{HK00}. More precisely, we have
\begin{equation}\label{eq: chamber of eff}
\Eff(X/T)
=
\bigcup_{g\colon X \dashrightarrow Y/T}
\left(g^*\Nef(Y/T) + \Exc(g)\right),
\end{equation}
where $g$ runs over all birational contractions over $T$, and $\Exc(g)$ denotes
the cone generated by the $g$-exceptional prime divisors. The analogous statement to Proposition \ref{prop: mov} also holds for the effective cone. We are grateful to the referee for bringing this fact to our
attention.

\begin{proposition}\label{prop: Mori chamber for Eff}
Under the same assumptions as in Proposition \ref{prop: mov}, after a
generically finite base change of $T$, there exists a non-empty Zariski open
subset $T_0\subset T$ such that, for any open subset $U\subset T_0$ and any
$t\in U$, the chamber decompositions in \eqref{eq: chamber of eff} of $
\Eff(X_{T_0}/T_0), \Eff(X_U/U)$ and $\Eff(X_t)$ are identified under the natural restriction maps.
\end{proposition}

\begin{proof}
After replacing $T$ by a generically finite base change and possibly shrinking
it, we may assume that Theorem \ref{mainthm: mmp} holds. By Theorem
\ref{thm: FT is constructible}, after shrinking $T$ further, we may assume that
$X/T$ is of Fano type. By Theorem \ref{mainthm: mmp} (2) and (3), an MMP$/T$ on $X$ restricts to an
MMP of the same type on each fiber $X_t$, and conversely every MMP on $X_t$ is
induced by an MMP$/T$. By Theorem
\ref{mainthm: mmp} (4), for any $Y/U\in \bc(X_U/U)$ and any $t\in U$, the
natural maps
\[
N^1(Y/U) \to N^1(Y_t), \quad
\Nef(Y/U) \to \Nef(Y_t)
\]
are isomorphisms.

It remains to compare the exceptional parts of the chambers. Shrinking $T_0$
further, we may assume that, for every birational contraction
$g\colon X_{T_0}\dashrightarrow Y/T_0$, every $g$-exceptional prime divisor
dominates $T_0$, and its restriction to each fiber is a $g_t$-exceptional
prime divisor. Conversely, by the compatibility of the MMP with fibers
described above, every $g_t$-exceptional divisor arises in this way. Hence the
cones generated by the exceptional divisors are identified under the natural
restriction maps.

Therefore, for any open subset $U\subset T_0$ and any $t\in U$, the
decompositions
\[
\Eff(X_U/U)
=
\bigcup_{g\colon X_U\dashrightarrow Y/U}
\left(g^*\Nef(Y/U)+\Exc(g)\right)
\]
and
\[
\Eff(X_t)
=
\bigcup_{g_t\colon X_t\dashrightarrow Y_t}
\left(g_t^*\Nef(Y_t)+\Exc(g_t)\right)
\]
have the same chambers under the above identifications.
\end{proof}

\section{Boundedness of birational models of Fano type varieties}\label{sec: bdd FT}

Based on the results developed in the previous sections, we present an application to the moduli problem of Fano type varieties. 

\begin{proof}[Proof of Theorem \ref{mainthm: bdd}]
By definition,  $\mS$ consists of closed fibers of a family of Fano type varieties $X/T$. By Noetherian induction and Theorem \ref{thm: fiberwise q-fact}, after replacing $T$ by a generically finite base change and possibly shrinking it, we can assume that there exists a fiber-wise small $\bQ$-factorial modification $Y \to X$. Note that  ${\rm bcm}(Y_t) = {\rm bcm}(X_t)$. Replacing $Y$ by $X$, we can assume that $X_t, t\in T$ are $\bQ$-factorial. 

After replacing $T$ by a generically finite base change and possibly shrinking it further, we may assume that Theorem \ref{mainthm: mmp} holds. Note that this replacement does not affect the $\bQ$-factoriality of the fibers. By Theorem \ref{thm: FT is constructible} and Noetherian induction, after shrinking $T$, we can assume that $X$ is of Fano type over $T$. 

Fix a $X_t$ for some $t\in T$, and suppose that $X_t \dasharrow Z$ is a birational contraction. As $X_t$ is a $\bQ$-factorial variety, we have $Z\in \bc(X_t)_{\rm MMP}$ by Proposition \ref{prop: bc=bcmmp}. By Theorem \ref{mainthm: mmp} (3), there exists an MMP$/T$, denoted by $X \dasharrow \mZ/T$, such that $Z \simeq \mZ_t$. Note that $\mZ/T\in \bc(X/T)$. By Proposition \ref{prop: finiteness}, $\bc(X/T)$ is a finite set. Thus, ${\rm bcm}(\mS)$ is bounded by definition.
\end{proof}

\begin{remark}\label{rem: example}
In contrast to Theorem \ref{mainthm: bdd}, boundedness fails even when considering crepant models of a fixed log pair.

For example, let $X = \bP^2$, and let $D$ and $B$ be two simple normal crossing curves on $X$ with $p \in D \cap B$. Set $c_n = 1 - \frac{1}{n}$ for $n \in \bZ_{>1}$. Suppose that $\pi_1: X_1= {\rm Bl}_p X \to X$ is the blow-up of $X$ at $p$ and let $D_1, B_1$ be the strict transforms of $D, B$ on $X_1$, respectively. Let $E_1$ be the exceptional divisor of $\pi_1$. Then we have
  \[
  K_{X_1}+c_nD_1+c_nB_1+(1-\frac 2 n)E_1=\pi_1^*(K_X+c_nD+c_nB).
  \] Next, blowing up $D_1 \cap E_1$, we get a variety $X_2$ with $D_2, B_2$ the strict transforms of $D_1, B_1$ on $X_2$, respectively. We still use $E_1$ to denote the strict transform of $E_1$ on $X_2$, and $E_2$ to denote the exceptional divisor of $X_2 \to X_1$. Then we have
  \[
  K_{X_2}+c_nD_2+c_nB_2+(1-\frac 2 n)E_1+(1-\frac 3 n)E_2=\pi_2^*(K_X+c_nD+c_nB),
  \] where $\pi_2: X_2 \to X$ is the corresponding morphism. We continue this process by successively blowing up each intersection $D_i \cap E_i$ at every step. This yields the equation
  \[
    K_{X_m}+c_nD_m+c_nB_m+(1-\frac 2 n)E_1+\cdots +(1-\frac{m+1}{n})E_m=\pi_m^*(K_X+c_nD+c_nB),
  \] where the notation has the same meaning as above. In particular, there exists a divisor $E_{n-1}$ whose discrepancy with respect to $(X, c_nD+c_nB)$ is $0$. By \cite[Corollary 1.4.3]{BCHM10}, there exists a birational morphism $\theta_n\colon Y_n \to X$ whose only exceptional divisor is $E_{n-1}$. Therefore, we have
  \[
  K_{Y_n}+c_nD_{Y_n}+c_nB_{Y_n}=\theta_n^*(K_X+c_nD+c_nB) 
  \] with $E_{n-1}$ the only exceptional divisor of $\theta_n$, where $D_{Y_n}, B_{Y_n}$ are strict transforms of $D, B$ on $Y_n$, respectively. In other words, $(Y_n, c_nD_{Y_n}+c_nB_{Y_n})$ is a crepant model of $(X, c_nD+c_nB)$. Note that $\{E_n \mid n \in \bZ_{\geq 1}\}$ consists of distinct divisors.

  We claim that $\{Y_n\mid n \in \bZ_{\geq 2}\}$ does not belong to a bounded family. If $\{Y_n\mid n \in \bZ_{\geq 2}\}$ belongs to a bounded family $\mY \to T$, then without loss of generality, we can assume that there exists a Zariski dense subset $S \subset T$ parametrizing a subset of $\{Y_n\mid n \in \bZ_{\geq 2}\}$. After shrinking $T$, we can assume that it satisfies the assumptions in Theorem \ref{mainthm: mmp}. Possibly shrinking $T$ further, by Theorem \ref{mainthm: decomposition}, we can assume that there exists an effective divisor $\mathcal E\subset\mY$ such that $\mathcal E_{s_0}$ is $\bQ$-linearly equivalent to the exceptional divisor of $\mY_{s_0} \simeq Y_i \to X=\bP^2$ for a fixed $s_0\in S\cap T$. By Theorem \ref{mainthm: mmp}, we can contract $\mathcal E$, and obtain the morphism $\Theta: \mY \to \mX/T$, which is exactly $\mY_{s_0} \simeq Y_i \to X=\bP^2$ over $s_0$. By the rigidity of $\bP^2$, we have $\mX_{T'} \simeq \bP^2 \times T'$, where $T' \to T$ is a finite base change (\cite[Example 5.3.1, Exercise 24.7(c)]{Har10}). Replacing $T'$ by $T$, we can assume that $\mX \simeq \bP^2 \times T$. Note that for any $Y_j$, there exists a unique morphism $\theta_j: Y_j \to \bP^2$, which is the one obtained in the above construction. Indeed, the exceptional curve of $\theta_j$ must be an exceptional curve of any other $Y_j \to \bP^2$ by the negativity of the intersection number. From this, we know that if $s_j\in S \cap T$ with $\mY_{s_j} \simeq Y_j$, then $\Theta|_{\mY_{s_j}} = \theta_j$. Now, by the construction of the divisors $E_n$, there exists a rational function $f\in K(X)$ such that $\nu_{E_i}(f)=0$ but $\nu_{E_j}(f)>0$ for each $j>i$, where $\nu_{E_n}$ denotes the valuation corresponding to $E_n$. This is because for any $j>i$, $E_j$ is obtained as further blow-ups over a point of $E_i$. Now we take $f$ as a rational function on $\mX = \bP^2 \times T$. By the above discussion, the zero set $\mathcal Z$ of $f$ on $\mY$ contains $E_j = \mathcal E|_{s_j}$, where $j>i$ (as $\nu_{E_j}(f)>0$). By the density of $S$, we have $\mathcal Z \supset \mathcal E$. However, this contradicts the fact that $\nu_{E_i}(f) =0$ (i.e., $f$ does not vanish on $E_i = \mathcal E|_s$). This shows that the set $\{Y_n\mid n \in \bZ_{\geq 2}\}$ is not bounded.
\end{remark}

\bibliography{reference.bib}
\end{document}